\newtheorem{theorem}{Theorem}[section]
\newtheorem{definition}[theorem]{Definition}
\newtheorem{remark}[theorem]{Remark}
\newtheorem{proposition}[theorem]{Proposition}
\newtheorem{lemma}[theorem]{Lemma}
\newcommand{\Z}{\mathbb{Z}}
\newcommand{\C}{\mathbb{C}}
\renewcommand{\phi}{\varphi}
\renewcommand{\SS}{\mathcal{S}}
\newcommand{\TT}{\mathcal{T}}
\newcommand{\A}{\mathcal{A}}
\renewcommand{\NG}{N(\Gamma)}
\DeclareMathOperator{\cl}{cl}
\begin{document}
\title{Symmetries of Spatial Graphs in $3$-manifolds}
\author{Erica Flapan \and Song Yu}
\address{Department of Mathematics, Pomona College, Claremont, CA 91711}
\address{Department of Mathematics, Columbia University, New York, NY 10027}
  \subjclass{ 57M15, 05C10, 57M60, 05E18,  57S25, 57S05}

    \keywords{symmetries, spatial graphs, homology spheres}

\thanks{The first author was supported in part by NSF Grant DMS-1607744. The second author was supported by Pomona College's Summer Undergraduate Research Program}

\date{}
\maketitle

\begin{abstract}  We consider when automorphisms of a graph can be induced by homeomorphisms of embeddings of the graph in a $3$-manifold.  In particular, we prove that every automorphism of a graph is induced by a homeomorphism of some embedding of the graph in a connected sum of one or more copies of $S^2\times S^1$, yet there exist automorphisms which are not induced by a homeomorphism of any embedding of the graph in any orientable, closed, connected, irreducible $3$-manifold. We also prove that for any $3$-connected graph $G$, if an automorphism $\sigma$ is induced by a homeomorphism of an embedding of $G$ in an irreducible $3$-manifold $M$, then $G$ can be embedded in an orientable, closed, connected $3$-manifold $M'$ such that $\sigma$ is induced by a finite order homeomorphism of $M'$, though this is not true for graphs which are not $3$-connected. Finally, we show that many symmetry properties of graphs in $S^3$ hold for graphs in homology spheres, yet we give an example of an automorphism of a graph $G$ that is induced by a homeomorphism of some embedding of $G$ in the Poincar\'e homology sphere, but is not induced by a homeomorphism of any embedding of $G$ in $S^3$.
\end{abstract}

\section{Introduction} \label{Intro} \par
The study of symmetries of spatial graphs in $S^3$ is motivated by a long history of results about symmetries of knots and links, as well as by the more recent study of symmetries of non-rigid molecules. What makes symmetries of spatial graphs distinct from those of knots and links is that for a spatial graph $\Gamma$ in $S^3$, homeomorphisms of $(S^3,\Gamma)$ can be classified by the automorphisms they induce on the underlying abstract graph.  

In fact, we can consider the automorphisms induced by homeomorphisms of a spatial graph embedded in any $3$-manifold $M$.  However, rather than focusing on which automorphisms can be induced by homeomorphisms of a given embedding $\Gamma$ in $M$, we are now interested in which automorphisms can be induced by a homeomorphism of some embedding of the graph in $M$.  That is, for each automorphism we can choose a different embedding.

\begin{definition} \label{RA} \rm{
An automorphism $\sigma$ of a graph $G$ is \textit{realizable} in a $3$-manifold $M$ if there is an embedding $\Gamma$ of $G$ in $M$ and a homeomorphism $h$ of $(M, \Gamma)$ whose restriction to $\Gamma$ is $\sigma$. In this case, we say that $h$ \textit{realizes} $\sigma$ in $M$.
}\end{definition}

 The following questions arise naturally from the above definition.

\begin{enumerate}
\item  Is every automorphism of a graph realizable in some $3$-manifold?
\medskip

\item Is every automorphism of a graph $G$ which is realizable in some $3$-manifold, necessarily realizable by a finite order homeomorphism of an embedding of $G$ in some (possibly different) $3$-manifold?
\end{enumerate}

The answer to Question (1) is ``no''  if we only consider graphs in $S^3$. In particular, it was shown in \cite{F89} that the automorphism $(1234)$ of the complete graph $K_6$ cannot be realized by any embedding of $K_6$ in $S^3$.  In fact, we prove in Section~\ref{Rigid} that the automorphism $(123)$ of the complete graph $K_7$ is not realizable in any orientable, closed, connected, irreducible $3$-manifold.   On the other hand, we show in Section~\ref{3M} that the answer to Question (1) is ``yes'', if we consider embeddings in reducible $3$-manifolds.  In particular, we prove that every graph automorphism is realizable in a connected sum of copies of $S^2\times S^1$.   

Thus non-realizability of an automorphism in $S^3$ does not imply its non-realizability in every $3$-manifold. However, as we show in Section~\ref{3M}, if an automorphism $\sigma$ of a graph is realizable in $S^3$ by an orientation preserving homeomorphism, then $\sigma$ is realizable in every $3$-manifold; and if $\sigma$ is realizable in $S^3$ by an orientation reversing homeomorphism, then $\sigma$ is realizable in every $3$-manifold that has an orientation reversing homeomorphism.

The answer to Question (2) is ``no'', as shown in the following lemma.  Thus we must be careful not to assume that every realizable automorphism can be realized by a finite order homeomorphism.

\begin{lemma}\label{123} Let $G$ be the graph illustrated in Figure ~\ref{theta} and let $\sigma=(123)$.  Then $\sigma$ is realizable in $S^3$, but $\sigma$ is not realizable by a finite order homeomorphism in any $3$-manifold.
\end{lemma}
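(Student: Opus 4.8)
The statement has two halves --- realizability of $\sigma$ in $S^3$, and non-realizability of $\sigma$ by any finite order homeomorphism --- which I would treat separately. For the first, I would take $\Gamma$ to be the embedding of $G$ drawn in Figure~\ref{theta} and build a homeomorphism $h$ of $(S^3,\Gamma)$ with $h|_\Gamma=\sigma$ straight from the picture: on a ball containing the edges that $\sigma$ cyclically permutes (placed with threefold symmetry) let $h$ be a rotation by $2\pi/3$; on a ball containing the edges and vertices that $\sigma$ fixes let $h$ be the identity; interpolate by a partial rotation across the separating sphere so that the two pieces match; and extend over the rest of $S^3$. One only checks that $h$ permutes the vertices and edges of $\Gamma$ as prescribed by $\sigma$ --- that $h$ must then have infinite order is precisely the content of the second half and need not be addressed here.

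For the second half, suppose for contradiction that $\Gamma$ is an embedding of $G$ in a $3$-manifold $M$ and $h$ is a finite order homeomorphism of $(M,\Gamma)$ with $h|_\Gamma=\sigma$; replacing $M$ by the component meeting $\Gamma$, we may assume $M$ connected. The plan is to reduce to $|h|=3$ and then contradict how $\langle h\rangle$ must act near a $\sigma$-fixed vertex. Two inputs drive the reduction: the known structure of finite group actions on $3$-manifolds (such an action is locally linear, so near a fixed point it is conjugate to an orthogonal action), and the elementary fact that a finite order self-homeomorphism of $[0,1]$ fixing both endpoints is the identity. Together they show: if a power $h^k$ fixes every vertex of $\Gamma$ and carries every edge to itself, then $h^k$ fixes each edge pointwise, so near a vertex $w$ of degree $\ge3$ its linearized fixed set contains three distinct edge germs; as distinct edges of an embedding meet only at their common vertex this fixed set cannot be a line, and if $h^k$ has odd order it cannot be a plane either (the only odd order orthogonal map of $\R^3$ fixing a plane pointwise is the identity), so $h^k=\mathrm{id}$ near $w$ and hence on $M$. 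Now write $|h|=3^am$ with $\gcd(m,3)=1$: then $h^m$ has order $3^a$ and induces a $3$-cycle on the edges, and if $a\ge2$ the odd order homeomorphism $g=(h^m)^3$ fixes every vertex and carries every edge to itself, forcing $g=\mathrm{id}$ and contradicting $|g|=3^{a-1}\ge3$. Hence $|h|=3$.

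With $|h|=3$: if $M$ is non-orientable, replace $(M,\Gamma,h)$ by its canonical lift to the orientation double cover, which is orientable; the lifted homeomorphism again has order $3$ (it is not the identity, since $h$ is not) and above every $\sigma$-fixed vertex $v$ it fixes a point whose edge germs it permutes exactly as $\sigma$ permutes those at $v$. So we may assume $M$ orientable, whence $h$, of odd order, is orientation preserving, and its linearization at any fixed point is a faithful orthogonal $\Z/3$-action on $\R^3$ --- a rotation by $2\pi/3$ about a line. Thus $\fix(h)$ is a $1$-manifold; if $\sigma(v)=v$ then $v\in\fix(h)$ and $h$ acts near $v$ as such a rotation about the arc $L\subseteq\fix(h)$ through $v$. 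Every edge fixed by $\sigma$ is carried to itself by $h$, hence (by the interval fact) has its germ at $v$ fixed pointwise, hence lying in $L$; but at most two edge germs can lie in $L$, one on each side of $v$. However $G$, as one reads from Figure~\ref{theta}, has a vertex $v$ with $\sigma(v)=v$ at which $\sigma$ fixes three incident edges --- a contradiction.

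The main obstacle is the second half. Both the reduction to $|h|=3$ and the local model of $h$ rest on local linearity of finite group actions on $3$-manifolds, a non-elementary but standard input (with roots in the Smith Conjecture and its generalizations via geometrization), and some care is needed for non-orientable $M$ (handled via the orientation cover) and for non-compact $M$ or $M$ with boundary, where the same local analysis applies. Once $h$ is known to act near a $\sigma$-fixed vertex as a genuine finite rotation, the edge-germ count finishes the proof at once, and the first half is a routine reading of the figure.
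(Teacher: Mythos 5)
Your proposal is correct, but the route for the second half is considerably longer than what the paper actually does, and it reaches for heavier tools than are needed. The paper's entire argument is: since $h$ has finite order and $h(v)=v$, one can choose an $h$-invariant ball $N(v)$ meeting $\Gamma'$ in the six edge-germs at $v$; then $h|_{\partial N(v)}$ is a finite order homeomorphism of $S^2$ which fixes three points of $\Gamma'\cap\partial N(v)$ and permutes three others in a $3$-cycle, and no finite order homeomorphism of $S^2$ can do that (a finite order homeomorphism of $S^2$ is conjugate to an element of $O(3)$; if its fixed set has $\ge 3$ points it is either the identity or a reflection, and a reflection has no $3$-cycles). This works directly for any order, any orientation behavior, any $3$-manifold, with no reduction. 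Your version instead reduces to $|h|=3$, passes to the orientation double cover, and invokes local linearity of finite group actions on $3$-manifolds together with a Newman-type open-and-closed argument to promote ``identity near $w$'' to ``identity on $M$''. All of that is correct, but it is avoidable, and citing local linearity of $3$-manifold actions is a much bigger hammer than what the setting requires: the paper works smoothly away from vertices, so linearizing the induced action on the $2$-sphere $\partial N(v)$ (where everything is smooth) is elementary, whereas ``linearizing at $v$'' itself is delicate precisely because the homeomorphism is only required to be a homeomorphism, not a diffeomorphism, at vertices. Your final contradiction — three $\sigma$-fixed edge germs at $v$ cannot all lie in a $1$-dimensional local fixed set — is the same geometric fact as the paper's ``three fixed points on $\partial N(v)$,'' just expressed after coning; and for the first half (realizability in $S^3$), your construction is essentially the paper's twist/rotation of the arcs through $1,2,3$.
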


\begin{figure}[http]
\includegraphics[width=3cm]{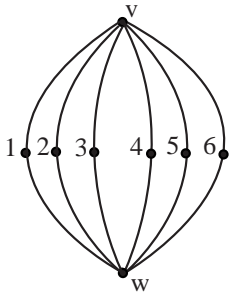}
\caption{The automorphism $(123)$ is realizable in $S^3$, but is not realizable by a finite order homeomorphism of an embedding of $G$ in any $3$-manifold.}\label{theta}
\end{figure}

\begin{proof}Let $\Gamma$ be the planar embedding of $G$ in $S^3$ illustrated in Figure~\ref{theta}.  Then $\sigma$ is realized in $S^3$ by twisting the arcs containing vertices $1$, $2$, and $3$ around vertices $v$ and $w$, while fixing the rest of the graph.

Now suppose that $G$ is embedded as $\Gamma'$ in some $3$-manifold $M$ such that $\sigma$ is realized by a finite order homeomorphism $h$ of $(M,\Gamma')$. Since $h(v)=v$ and $h$ has finite order, we can choose a neighborhood $N(v)$ such that $h(N(v))=N(v)$ and $\Gamma'$ intersects $\partial N(v)$ in six points, corresponding to the six edges incident to $v$.  Thus $h$ induces a finite order homeomorphism of the sphere $\partial N(v)$.  But since $h$ induces $(123)$ on $\Gamma'$, three of the points in $\Gamma'\cap \partial N(v)$ are fixed and three are permuted in a cycle of order three.  As this is  impossible for a finite order homeomorphism of $S^2$,  $\sigma$ cannot be realized by a finite order homeomorphism of an embedding of $G$ in any $3$-manifold $M$.\end{proof}

The Rigidity Theorem below shows that for $3$-connected graphs in $S^3$, examples like the one in Figure~\ref{theta} do not occur.  
\begin{theorem} [Rigidity Theorem for $S^3$ \cite{F95}] \label{RigidS3}
Let $G$ be a 3-connected graph. Suppose that an automorphism $\sigma$ of $G$ is realized by a homeomorphism $h$ of some embedding of $G$ in $S^3$. Then $\sigma$ is realizable in $S^3$ by a homeomorphism $f$ of finite order. Moreover, $f$ can be chosen to be orientation reversing if and only if $h$ is orientation reversing.
\end{theorem}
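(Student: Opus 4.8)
The plan is to pass to the complement of the spatial graph, use the canonical decomposition and geometric rigidity of $3$-manifolds to replace the (possibly infinite order) realizing homeomorphism by a finite order one on the complement, and then use the hypothesis that $G$ is $3$-connected to push this finite order homeomorphism back across a regular neighborhood of the graph. The freedom granted by Definition~\ref{RA} is used repeatedly: we only need $\sigma$ realized by a finite order homeomorphism of \emph{some} embedding of $G$, not of the given one. For the routine reductions: since $\sigma$ is an automorphism of a finite graph it already has finite order, so the content is that $h$ need not; a $3$-connected graph is connected and is not a tree, so a regular neighborhood $N=N(\Gamma)$ is a handlebody of genus $g\ge 1$ equal to the first Betti number of $G$, and after an ambient isotopy I may assume $h$ is PL with $h(N)=N$. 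Then $h$ restricts to a homeomorphism of the compact orientable $3$-manifold $C=\overline{S^3\setminus N}$, whose boundary $\partial C=\partial N$ is a connected genus-$g$ surface; note $C$ is irreducible, since a $2$-sphere in $C$ bounds a ball in $S^3$ on the side disjoint from the connected submanifold $N$.

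The heart of the argument is to replace $h|_C$ by a finite order homeomorphism of $C$ in its isotopy class. When $\partial C$ is incompressible, $C$ is Haken and has a canonical JSJ decomposition, preserved by $h$ up to isotopy, whose pieces are Seifert fibered or hyperbolic by geometrization (the boundary-compressible cases, including the case $C$ a handlebody, are treated separately and more easily). On a hyperbolic piece the relevant mapping class is realized by an isometry, since by Mostow rigidity the outer automorphism group is finite and geometrically realized; on a Seifert fibered piece it is realized by a fiber-preserving periodic map in the standard way; these realizations can be chosen compatibly along the JSJ tori, and the Dehn twists along those tori that obstruct finite order can be absorbed by a further re-embedding of $\Gamma$, since such a twist is supported in $C$ away from $\Gamma$ and a re-embedding ``unrolls'' it. This produces a finite order homeomorphism $f_C$ of $C$, isotopic to $h|_C$, restricting to a finite order homeomorphism $\phi$ of $\partial C$.

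It remains to extend $f_C$ over $N$, and this is where $3$-connectedness is essential and where I expect the main difficulty: the non-$3$-connected graph of Figure~\ref{theta} shows, via Lemma~\ref{123}, that no finite order extension can exist in general. For a $3$-connected graph, however, the way $\Gamma$ sits in its handlebody neighborhood is rigid in the spirit of Whitney's unique embedding theorem: the handle structure of $N$ (a ball at each vertex, a $1$-handle along each edge, together with the ``fat graph'' cyclic data at the vertices) is recoverable from $\partial C$ and the abstract graph, and one checks that $\phi$ carries the meridian system of $N$ to itself and hence extends to a finite order homeomorphism $f_N$ of $N$ that permutes the balls and $1$-handles precisely according to $\sigma$. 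Gluing $f_C$ to $f_N$ gives a finite order homeomorphism $f$ of $S^3$ whose restriction to the (possibly re-embedded) copy of $G$ is $\sigma$. The genuinely delicate points are this extension — producing an $f_N$ that induces $\sigma$ \emph{on the nose}, rather than merely some automorphism inducing $\phi$ on $\partial C$, is exactly what forces us to use $3$-connectedness and to allow the embedding to change — together with the assembly of the geometric pieces of $C$ into a single global finite order map with controlled behavior on $\partial C$.

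Finally, the ``moreover'' follows by bookkeeping: every replacement above — the geometric realizations on hyperbolic and Seifert pieces, and the gluing — can be performed without altering whether the map is orientation preserving on $S^3$, so $f$ can be taken orientation reversing exactly when $h$ is.
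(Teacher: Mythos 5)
Your high-level strategy -- pass to the complement, replace $h$ there by a finite order map via geometrization and Mostow rigidity, then push back across $N(\Gamma)$ using $3$-connectedness -- matches the paper's, and you correctly identify the extension over $N(\Gamma)$ as the crux. But there is a genuine gap: you work only with the JSJ \emph{torus} decomposition, whereas the paper's proof hinges on the JSJ characteristic submanifold decomposition for \emph{pared manifolds}, applied to $(X,P)$ where $X$ is the JSJ piece adjacent to $\partial N(\Gamma)$ and the pared locus $P$ consists of the boundary tori of $X$ together with the edge annuli $\partial'N(E)\subseteq\partial N(\Gamma)$. This is exactly where $3$-connectedness enters: the absence of type I, II, III spheres (guaranteed by $3$-connectedness, Remark~\ref{Sph3C}) is used to show $\partial X-P$ is incompressible, that essential annuli in the characteristic family $\A$ either isolate a single edge or bound a pinched region meeting $\Gamma$ in one vertex, and finally that the central piece $W$ is a \emph{simple} pared manifold -- neither Seifert fibered nor $I$-fibered -- and hence hyperbolizable by Thurston. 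Without the annular decomposition you have no control over how the edges of $\Gamma$ are encoded in the complement, so your invocation of a Whitney-style ``recovery of the fat graph from $\partial C$'' cannot actually certify that the finite order replacement permutes vertex balls and edge tubes according to $\sigma$; the paper's careful reconstruction of $N(V)$ and the new edge arcs $C_i$ in the tubes $D_i\times I$ is what the Whitney hand-wave would need to become.

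A second gap is your proposal to make the replacement finite order on all of $C$, ``absorbing'' the Dehn-twist obstructions along JSJ tori by re-embedding $\Gamma$. This is both unjustified (a twist far from $\partial N(\Gamma)$ is not obviously ``unrolled'' by any re-embedding, and composing $h$ with such twists changes its isotopy class in an uncontrolled way) and unnecessary. The paper never tries to realize $h|_C$ by a single finite order map of $C$: it hyperbolizes only the pared piece $W$, obtains a finite order $f$ on $W$ by doubling along $\partial W-(P\cup\A)$ and applying Mostow rigidity, discards the rest of $C$ entirely, and then refills the remaining boundary tori of $W\cup N(V)$ with solid tori along carefully chosen slopes $\ell_j$ -- for $M=S^3$ these are chosen so that refilling the knot complements and solid tori recovers $S^3$. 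That discard-and-refill step, not Dehn-twist cancellation, is what makes the finite order claim go through, and it is also exactly why Definition~\ref{RA} allows the embedding to change.
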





 In Section \ref{Rigid}, we generalize Theorem~\ref{RigidS3} to spatial graphs in homology spheres and other irreducible $3$-manifolds, which allows us to extend many previous results on symmetries of spatial graphs in $S^3$ to homology spheres. However, symmetry properties of graphs in homology spheres do not completely coincide with those in $S^3$.  In particular, in Section \ref{CompareHS} we give an example of an automorphism of a graph which is realizable in the Poincar\'e homology sphere, but is not realizable in $S^3$.

Note that by a \emph{graph}, we mean a finite, connected graph without self-loops or multiple edges between a pair of vertices, and we say a graph is {\it $3$-connected} if three or more vertices together with their incident edges must be removed in order to disconnect the graph or reduce it to a single vertex.   We work throughout in the smooth category, and require our homeomorphisms to be diffeomorphisms except possibly on the set of vertices of an embedded graph.

\medskip

\section{Realizable automorphisms of spatial graphs in $3$-manifolds}\par \label{3M}

As mentioned in Section~\ref{Intro}, not every graph automorphism is realizable in $S^3$.  But the following proposition shows that every  automorphism is realizable in some $3$-manifold.

\begin{proposition} \label{EveryAut}
Every automorphism $\sigma$ of a graph $G$ is realizable by an orientation preserving homeomorphism in a connected sum of one or more copies of $S^2\times S^1$, though such a homeomorphism will not necessarily be of finite order.
\end{proposition}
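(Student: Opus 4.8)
The plan is to construct $\Gamma$ and $h$ explicitly by doubling a suitable handlebody. The starting point is that $\sigma$, being an automorphism of a finite graph, has finite order, and — crucially — we are \emph{not} required to embed $G$ in $S^3$, so there is no global obstruction of the kind that makes automorphisms like $(123)$ of $K_7$ unrealizable there; only the local pictures at the vertices must be controlled, and those become harmless once we are allowed an abstract handlebody to work in. The geometric input is the standard fact that the double of a genus-$g$ handlebody is $\#_g(S^2\times S^1)$, which is precisely the kind of manifold in the statement.

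\textbf{Step 1: a handlebody carrying $\sigma$.} I would first build an abstract compact orientable handlebody $H$, an embedding of $G$ in $H$ as a spine, and an orientation-preserving homeomorphism $\hat\sigma\colon H\to H$ with $\hat\sigma|_G=\sigma$. Assign to each vertex $v$ a $3$-ball $B_v$ containing the image of $v$ as an interior point, with $\deg(v)$ disjoint marked disks on $\partial B_v$, one per edge-end at $v$; assign to each edge $e$ a tube $T_e\cong D^2\times[0,1]$ whose core arc is the image of $e$; and glue the ends of each $T_e$ to the appropriate marked disks. The result is a handlebody $H$ of genus $g=|E(G)|-|V(G)|+1$ with spine $G$. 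Now choose the gluing data one $\langle\sigma\rangle$-orbit at a time: fix a representative vertex and edge in each orbit, choose homeomorphisms $B_v\to B_{\sigma(v)}$ and $T_e\to T_{\sigma(e)}$ freely along the non-return steps, and choose the return-step homeomorphisms so that the composite around each orbit realizes the prescribed permutation of the marked disks on the relevant sphere $\partial B_v$ (when a power of $\sigma$ fixes $v$) and the prescribed flip of the relevant tube (when a power of $\sigma$ reverses $e$). Both are always possible: any permutation of finitely many marked disks on $S^2$ is realized by an orientation-preserving homeomorphism of $S^2$, which extends over $B^3$ by coning (non-smoothness at the vertex being permitted), and any flip of $D^2\times[0,1]$ is realized by an orientation-preserving homeomorphism of the tube. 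Assembling these pieces yields $\hat\sigma$.

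\textbf{Step 2: doubling.} Let $M=H\cup_{\partial}\overline{H}$ be the double of $H$; it is closed, connected, and orientable, and since $H$ is a genus-$g$ handlebody, $M\cong\#_g(S^2\times S^1)$. Place $\Gamma=G$ in the first copy of $H$ and set $h=\hat\sigma\cup\hat\sigma$, which is a well-defined homeomorphism of $M$ because the two copies of $\hat\sigma$ agree with the identity along the gluing region. Then $h$ is orientation preserving (because $\hat\sigma$ is) and $h|_\Gamma=\hat\sigma|_G=\sigma$, so $\sigma$ is realized in $M$. If $G$ is a tree then $g=0$ and $M=S^3$; in that case — or simply to guarantee at least one summand in general — I would connect-sum on one extra copy of $S^2\times S^1$ inside a ball disjoint from $\Gamma$ and extend $h$ by the identity there, equivalently attaching one extra $1$-handle to $H$ in Step~1 away from the spine.

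Two observations finish the picture. First, $h$ need not have finite order: $\hat\sigma^{\,n}$ (where $n$ is the order of $\sigma$) fixes $G$ pointwise yet need not equal the identity of $H$, and in fact by Lemma~\ref{123} there exist $G$ and $\sigma$ for which no homeomorphism realizing $\sigma$ in \emph{any} $3$-manifold can be of finite order, which gives the parenthetical claim. Second, the part of the argument demanding real care is Step~1 — verifying that the gluing data can be chosen simultaneously $\langle\sigma\rangle$-equivariantly and orientation-preservingly across all orbits, including vertex orbits of size properly dividing $n$ and edge orbits reversed by a power of $\sigma$, and confirming that the resulting $H$ is genuinely a handlebody; I expect this bookkeeping to be the main, though essentially routine, obstacle, while the double being $\#_g(S^2\times S^1)$ and the doubled map behaving as claimed are standard.
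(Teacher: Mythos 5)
Your proof takes essentially the same approach as the paper's: construct a handlebody neighborhood of $G$ carrying an orientation-preserving homeomorphism that restricts to $\sigma$, then double it to obtain a connected sum of copies of $S^2\times S^1$. You spell out the equivariant construction of the handlebody homeomorphism more explicitly than the paper (which simply asserts its existence), and your stabilization fix for the tree and single-edge case (genus $0$, double $= S^3$) handles a degenerate case that the paper's proof, as literally written, does not address.
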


We will use the following construction of a neighborhood of a graph in a $3$-manifold throughout the paper.  Let $\Gamma$ be an embedding of a graph $G$ in a $3$-manifold $M$, with embedded sets $V$ and $E$ of vertices and edges respectively. For each vertex $v \in V$, take a ball $N(v)$ around $v$ whose intersection with $\Gamma$ is a connected set containing no vertex other than $v$. We then let $N(V) = \bigcup_{v \in V} N(v)$. For each edge $e \in E$, take a solid tube $N(e) \cong D \times I$ that has $\cl(e - N(V))$ as its core, such that $N(e) \cap N(V)$ consists of two disks $D \times \{0\}$ and $D \times \{1\}$ in $\partial N(V)$. We let $N(E) = \bigcup_{e \in E} N(e)$. Note that we can assume that the $N(v)$'s are pairwise disjoint and the $N(e)$'s are pairwise disjoint. Finally, let $\NG = N(V) \cup N(E)$, and observe that $\NG$ is a handlebody  in $M$ that contains $\Gamma$ as its spine and as such it is unique up to isotopy.
\medskip

\begin{proof}[Proof of Proposition \ref{EveryAut}]
Let $\Gamma$ denote an embedding of $G$ in $S^3$ and let $N = N(\Gamma)$ be the handlebody described above. There is a natural orientation preserving homeomorphism $g$ of $(N, \Gamma)$ that induces the automorphism $\sigma$ on $\Gamma$. Now let $M$ be the double of $N$.  Then $M$ is a closed 3-manifold. We can then view $\Gamma$ as an embedding of $G$ in $M$, and we can extend $g$ to $M-N$ by copying its behavior on $N$ to obtain an orientation preserving homeomorphism of $(M, \Gamma)$ that realizes $\sigma$.  By our construction, $M$ is a connected sum of $n$ copies of $S^2\times S^1$, where $n$ is the genus of $N$.  

Note that by the example given in Lemma~\ref{123} we know that the homeomorphism constructed above will not necessarily be of finite order. \end{proof}

Thus the non-realizability of an automorphism in $S^3$ does not imply its non-realizability in other $3$-manifolds.   On the other hand, the following proposition shows that the realizability of an automorphism in $S^3$ does imply its realizability in other $3$-manifolds, though it says nothing about the homeomorphism of the $3$-manifold having finite order.

\begin{proposition} \label{Aut3M}
Let $\sigma$ be an automorphism of a graph $G$.
\begin{enumerate}[label=(\arabic*), topsep=-5pt]
\item If $\sigma$ is realizable in $S^3$ by an orientation preserving homeomorphism, then $\sigma$ is realizable in every connected $3$-manifold by an orientation preserving homeomorphism.

\medskip

\item If $\sigma$ is realizable in $S^3$ by an orientation reversing homeomorphism, then $\sigma$ is realizable by an orientation reversing homeomorphism in every connected $3$-manifold that possesses an orientation reversing homeomorphism.
\end{enumerate}
\end{proposition}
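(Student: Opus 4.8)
The plan is to prove both parts by first \emph{localizing} the $S^3$-realization inside a ball containing $\Gamma$, and then transplanting that ball into an arbitrary $3$-manifold $M$.

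\medskip

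\noindent\emph{The localization step.} Suppose $\sigma$ is realized by a homeomorphism $h$ of $(S^3,\Gamma)$. Since $\Gamma$ is a connected graph, $S^3\setminus\Gamma$ is connected, so I may pick $p\in S^3\setminus\Gamma$ and an ambient isotopy of $S^3$ that is the identity on $\Gamma$ and carries $h(p)$ back to $p$; composing $h$ with its time-one map (orientation preserving, being isotopic to the identity) changes neither the orientation behavior of $h$ nor the induced automorphism, so I may assume $h(p)=p$. Next, after composing $h$ with a diffeomorphism $\phi$ supported in a small ball about $p$ disjoint from $\Gamma$, where $\phi$ carries a still smaller ball $b\ni p$ onto $h^{-1}(b)$, I may assume $h(b)=b$; again $\sigma$ is unchanged since $\phi$ is the identity on $\Gamma$. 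Finally $h|_{\partial b}$ is an orientation preserving (resp.\ orientation reversing) diffeomorphism of $S^2$, hence isotopic to the identity (resp.\ to a fixed reflection $\rho$); pushing this isotopy into a bicollar of $\partial b$ disjoint from $\Gamma$, I may assume $h|_{\partial b}$ equals the identity (resp.\ $\rho$). Setting $B=\cl(S^3\setminus b)$, a ball with $\Gamma\subset\Int(B)$ and $h(B)=B$, we conclude: if $\sigma$ is realizable in $S^3$ by an orientation preserving homeomorphism, then it is realized by a homeomorphism $h_0$ of $(B,\Gamma)$ with $h_0|_{\partial B}=\mathrm{id}$; and if it is realizable in $S^3$ by an orientation reversing homeomorphism, then it is realized by an orientation reversing homeomorphism $h_0$ of $(B,\Gamma)$ with $h_0|_{\partial B}=\rho$.

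\medskip

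\noindent\emph{Proof of (1).} Let $M$ be any connected $3$-manifold, choose a ball $B'\subset M$ and a diffeomorphism $B\to B'$, and let $\Gamma'\subset\Int(B')$ be the image of $\Gamma$. Because $h_0|_{\partial B}=\mathrm{id}$, the map equal to the conjugate of $h_0$ on $B'$ and to the identity on $M\setminus\Int(B')$ is a well-defined homeomorphism $f$ of $M$ (a diffeomorphism away from the vertices of $\Gamma'$). It preserves orientation, being the identity on a nonempty open set, and it realizes $\sigma$ on $\Gamma'$.

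\medskip

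\noindent\emph{Proof of (2).} Let $r$ be an orientation reversing homeomorphism of $M$ (so $M$ is orientable; fix an orientation). Applying the maneuver of the localization step to $r$ in place of $h$ — using connectedness of $M$, the disc theorem, and the fact that orientation reversing diffeomorphisms of $S^2$ are isotopic to $\rho$ — I may assume $r$ maps a ball $b'\subset M$ to itself with $r|_{\partial b'}=\rho$. Put $M^\circ=\cl(M\setminus b')$; then $r|_{M^\circ}$ is an orientation reversing homeomorphism of $M^\circ$ restricting to $\rho$ on $\partial M^\circ\cong S^2$. Now glue $(B,\Gamma)$ to $M^\circ$ along their boundary $2$-spheres by a diffeomorphism identifying the two copies of $\rho$; the result is diffeomorphic to $M$, with $\Gamma$ sitting in $\Int(B)$, and the map equal to $h_0$ on $B$ and to $r|_{M^\circ}$ on $M^\circ$ is a well-defined homeomorphism of $M$ — orientation reversing, since it is so on the nonempty open set $\Int(M^\circ)$ — realizing $\sigma$.

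\medskip

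\noindent\emph{The main obstacle.} The transplantation is routine once the model is in place; the delicate point is the localization, and in particular, for part (2), the fact that the hypothesis ``$M$ possesses an orientation reversing homeomorphism'' must genuinely be used: an arbitrary such homeomorphism need not respect any $2$-sphere in $M$, so one first has to normalize it to preserve a ball and to equal the model reflection $\rho$ on that ball's boundary, so that it can be glued to $h_0$. The only tools needed are standard: connectedness of complements of graphs in $S^3$ and of manifolds, the disc (isotopy extension) theorem, and the classification of diffeomorphisms of $S^2$ up to isotopy.
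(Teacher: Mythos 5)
Your proof is correct and follows essentially the same strategy as the paper's: localize the realizing homeomorphism $h$ to a ball in $S^3$ containing $\Gamma$, and then transplant that ball into $M$, using (for part (2)) a normalization of the given orientation-reversing homeomorphism of $M$ so that it preserves a complementary ball. The only cosmetic difference is that you normalize the boundary $2$-sphere map to a standard model ($\mathrm{id}$ or a fixed reflection $\rho$) before gluing, whereas the paper keeps the boundary maps general and absorbs the isotopy between them in a collar; both devices accomplish the same thing.
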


\begin{proof}Let $\Gamma$ be an embedding of $G$ in $S^3$ and $h$ be a homeomorphism of $(S^3, \Gamma)$ that realizes $\sigma$. Then $h$ is isotopic to a homeomorphism of $(S^3, \Gamma)$ that induces $\sigma$ on $\Gamma$ and fixes some point $p$ in $S^3 - \Gamma$. Thus, we can assume without loss of generality that $h$ setwise fixes a closed ball around $p$ that is disjoint from $\Gamma$. Let $B$ be the closure of the complement of this ball in $S^3$ and let $S = \partial B$. Then $\Gamma \subseteq B$, $h(B) = B$, and $h(S) = S$; and since $h$ does not interchange the two components of $S^3-S$, the restriction $h|_S$ is orientation reversing if and only if $h$ is orientation reversing.

Let $M$ be a connected orientable $3$-manifold and let $f$ be a homeomorphism of $M$ that is orientation reversing if and only if $h$ is orientation reversing. As we saw above for $h$, without loss of generality, we can assume that $f$ setwise fixes a closed ball $B'$ in $M$ and restricts to a homeomorphism of $S' = \partial B'$ which is orientation reversing on $S'$ if and only if $f$ is orientation reversing on $M$.  Thus $h|_S$ is orientation reversing if and only if $f|_{S'}$ is orientation reversing.

Let $S'\times I$ denote a collar of $S'$ in $B'$ with $S'\times \{1\}=S'$, and let $B''$ be the closed ball in $B'$ whose boundary is $S'\times \{0\}$.  Then there is a homeomorphism $\psi:B''\to B$.  Now $\psi^{-1}\circ h\circ\psi:B''\to B''$ is a homeomorphism  which is orientation reversing if and only if $h$ is orientation reversing.  In particular, the two homeomorphisms $\psi^{-1}\circ h\circ\psi|_{\partial B''}$ and $f|_{S'}$ are self-maps of $2$-spheres which are either both orientation preserving or both orientation reversing. Thus they are isotopic.

We now define a homeomorphism $g: M \to M$ that agrees with $\psi^{-1}\circ h\circ\psi$ on $B''$ and agrees with $f$ on $\mathrm{cl}(M-B')$, and by using the above isotopy we extend $g$ within the collar $S' \times I$. Thus $\Gamma'=\psi^{-1}(\Gamma)$ is an embedding of $G$ in $B'' \subseteq M$ that is setwise fixed by $g$, and $g|_{\Gamma'}$ induces the automorphism $\sigma$ on $\Gamma'$.  Furthermore, $g$ is orientation preserving if and only if $h$ is orientation preserving.
\end{proof}
\medskip


\medskip

\section{Rigidity of symmetries of spatial graphs} \label{Rigid}\par
In this section, we generalize the Rigidity Theorem to homology spheres and irreducible $3$-manifolds.  The goal is to show that under suitable conditions, realizable automorphisms can be realized by finite order homeomorphisms though the embedding and even the manifold in which the graph is embedded may need change.  We begin by defining three types of spheres relative to a spatial graph in a $3$-manifold (see Figure \ref{SepSph})

\begin{definition} \rm{
Let $\Gamma$ be a spatial graph in a $3$-manifold $M$.
\begin{enumerate}[label=(\arabic*), topsep=-5pt]
\item A \textit{type I sphere} of $\Gamma$ is a sphere $S$ embedded in $M$ that satisfies:
	\begin{enumerate}[label=(\roman*), topsep=-2pt]
		\item $S\cap \Gamma$ is a single vertex;
		\item each of the components of $M -S$ has non-empty intersection with $\Gamma$.
	\end{enumerate}
	\medskip

\item A \textit{type II sphere} of $\Gamma$ is a sphere $S$ embedded in $M$ that satisfies:
	\begin{enumerate}[label=(\roman*), topsep=-2pt]
		\item $S\cap\Gamma$ is two vertices;
		\item the closure of each component of $M - S$ contains at least two edges of $\Gamma$;
		\item the annulus $S \cap \cl(M-\NG)$ is incompressible in $\cl(M-\NG)$.
	\end{enumerate}
	\medskip

\item A \textit{type III sphere} is a pinched sphere $S$ embedded in $M$ that satisfies:
	\begin{enumerate}[label=(\roman*), topsep=-2pt]
		\item $S\cap\Gamma$ is a single vertex which is also the pinch point of $S$;
		\item each of the components of $M -S$ has non-empty intersection with $\Gamma$;			\item the annulus $S \cap \cl(M-\NG)$ is incompressible in $\cl(M-\NG)$.
	\end{enumerate}
\end{enumerate}
}\end{definition}

Note, we abuse notation in (3) by referring to $S$ as a type III {\it sphere} rather than a {\it pinched sphere}.

\begin{figure}[h]
	\begin{center}
		\includegraphics[scale=0.5]{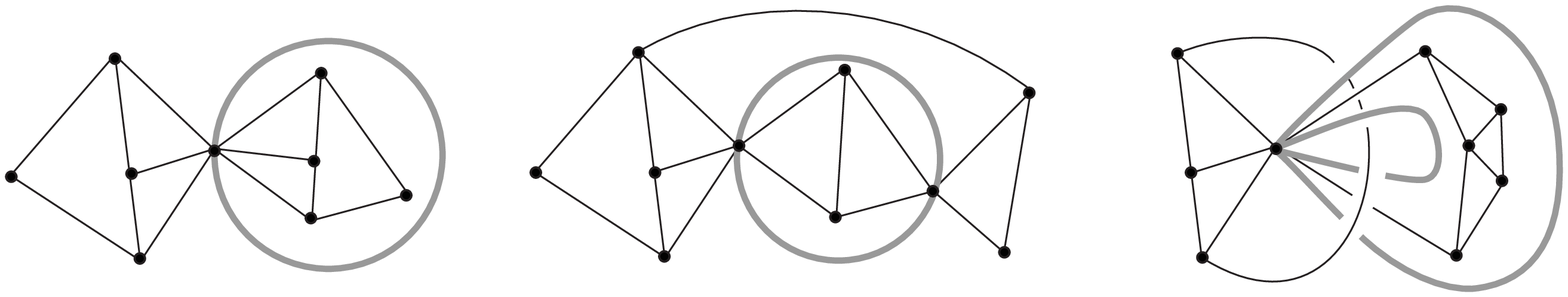}
	\end{center}
	\caption{Spatial graphs with type I, II, and III spheres in grey.}
	\label{SepSph}
\end{figure}

\medskip

\begin{remark} \label{SphVal} \rm{
Suppose that a graph $G$ has at least two edges.  If $G$ has a vertex of valence one, then every embedding of $G$ in a $3$-manifold has a type I sphere; and if $G$ has a vertex of valence two, then every embedding of $G$ in a $3$-manifold has a type II sphere (See Figure \ref{LowVal}). In particular, if the handlebody $\NG$ has genus less than 2, then every embedding of $G$ in any $3$-manifold has a sphere of type I or II.
}\end{remark}

\begin{figure}[h]
\[	\includegraphics[scale=0.5]{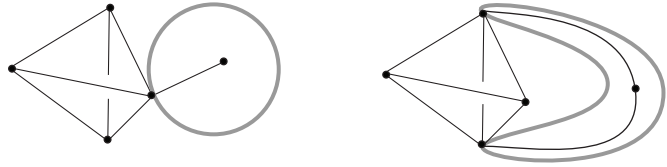}
\]
	\caption{Vertices of valence $1$ or $2$ can be separated by spheres of type I or II}
	\label{LowVal}
\end{figure}


\begin{theorem}[Rigidity Theorem] \label{RigidHS}
Let $M$ be an orientable, closed, connected, irreducible $3$-manifold, let $\Gamma$ be an embedding of a graph $G$ in $M$ that does not have any spheres of type I, II, or III, and let $\sigma$ be an automorphism of $G$ that is realized by a homeomorphism $h$ of $(M, \Gamma)$.  Then the following hold.
\begin{enumerate}[label=(\arabic*), topsep=-5pt]
\item If $M=S^3$, then $\sigma$ is realizable in $S^3$ by a homeomorphism $f$ of finite order;\par
\item If $M$ is a homology sphere, then $\sigma$ is realizable in some homology sphere by a homeomorphism $f$ of finite order;\par
\item Otherwise, $\sigma$ is realizable in some orientable, closed, connected $3$-manifold by a homeomorphism $f$ of finite order;\par
\end{enumerate}
\medskip

\noindent Moreover, $f$ can be chosen to be orientation reversing if and only if $h$ is orientation reversing.
\end{theorem}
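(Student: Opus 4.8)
The plan is to put $h$ into a standard form first in a regular neighborhood of $\Gamma$ and then in its exterior, using the geometric decomposition of Haken manifolds, and finally to reassemble while keeping track of the ambient manifold; this follows the strategy by which Theorem~\ref{RigidS3} was proved in \cite{F95}. Write $X=\cl(M-\NG)$. Since $\Gamma$ has no type~I or type~II sphere, $\NG$ has genus $n\ge 2$ by Remark~\ref{SphVal}, and irreducibility of $M$ gives that $X$ is irreducible. The boundary $\partial X=\partial\NG$ need not be incompressible in $X$ --- for instance $X$ is a handlebody when $\Gamma$ is a planar graph in $S^3$ --- but the absence of spheres of types~I, II and~III restricts the compressing disks of $\partial X$ in $X$ enough that a maximal compression produces a core $X_0\subseteq X$ with incompressible boundary, hence Haken (or a ball, a trivial case), with $M$ recovered from $\NG\cup X_0$ by attaching $1$-handles along a pattern prescribed by $G$ and $h$.

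The first substantive point is that the hypotheses make $h$ rigid near $\Gamma$. Since $h|_\Gamma=\sigma$ we have $h(\Gamma)=\Gamma$, so after an ambient isotopy $h(\NG)=\NG$ and $h$ also preserves the compression system. I would then show that irreducibility of $M$ together with the absence of type~I and type~III spheres forces, at each periodic vertex, the permutation that $\sigma$ induces on the incident edge ends, and the twisting of $h$ along those edges, to be compatible with a finite-order homeomorphism of $(\NG,\Gamma)$: a vertex permutation not realized by any finite-order homeomorphism of $S^2$ (the obstruction in Lemma~\ref{123}), or a twisting incompatible with finite order, would force a sphere of one of the forbidden types or an essential sphere in $M$. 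Granting this, isotope $h$ so that $h_N:=h|_{\NG}$ is a finite-order homeomorphism of $(\NG,\Gamma)$ realizing $\sigma$, orientation reversing if and only if $h$ is; let $n$ be the order of $\sigma$.

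Next I treat the exterior. The isotopy above also yields $h_0:=h|_{X_0}$ together with maps of the $1$-handles, compatible on the shared boundaries. The characteristic (JSJ) submanifold of the Haken manifold $X_0$ is canonical up to isotopy, so $h_0$ preserves it up to isotopy; applying Mostow rigidity on the hyperbolic pieces, fiber-preserving isotopies and the solution of the Nielsen realization problem on the Seifert-fibered pieces, and the equivariant torus and annulus theorems, I would isotope $h_0$ to a homeomorphism that is of finite order on each geometric piece, permutes the pieces with bounded period, and is orientation reversing on each piece exactly when $h$ is. Cutting $X_0$ along its characteristic tori and $\partial X_0$ and regluing the pieces compatibly with these finite-order models yields a finite-order homeomorphism $f_0$ of a possibly new compact manifold $X_0'$ with $\partial X_0'\cong\partial X_0$. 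Because the $1$-handle pattern is $\sigma$-symmetric it carries $f_0$-equivariant framings; reattaching the handles to $X_0'$ and gluing the result to $\NG$ (adjusting this last gluing if necessary so that $f_N$ and $f_0$ agree on $\partial\NG$) produces a closed orientable $3$-manifold $M'$ containing $\Gamma'=\Gamma$, and $f:=f_N\cup f_0\cup(\text{handle maps})$ is a finite-order homeomorphism of $M'$ realizing $\sigma$, orientation reversing if and only if $h$ is. The only changes to the gluing data of $M$ were along incompressible tori and along $\partial\NG$, and can be taken to act trivially on homology, so $M'$ is a homology sphere whenever $M$ is; and when $M=S^3$ one verifies, as in \cite{F95}, that no regluing is forced, so $M'=S^3$.

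The main obstacle is the compatibility threaded through the last two steps: first extracting from the no-sphere hypotheses and the irreducibility of $M$ exactly the local conditions at the vertices that make a finite-order realization on $\NG$ possible; then arranging the finite-order models on the geometric pieces of $X_0$, on the $1$-handles, and on $(\NG,\Gamma)$ so that they glue up consistently; and finally pinning down the homeomorphism type of the reassembled manifold $M'$, in particular the claims that $M'=S^3$ when $M=S^3$ and that $M'$ is a homology sphere when $M$ is.
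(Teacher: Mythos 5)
Your proposal captures the right toolkit (JSJ decomposition, Mostow rigidity, Waldhausen, reassembly) but differs from the paper's argument in several places where the differences are not cosmetic, and a few of your explicit claims are wrong.

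\textbf{Irreducibility of the exterior.} You assert that irreducibility of $M$ gives that $X=\cl(M-\NG)$ is irreducible. This is false: if $M\neq S^3$ and $\Gamma$ lies inside a ball in $M$, then the boundary of a slightly smaller ball is an essential sphere in $X$. The paper must work to arrange irreducibility of $X$: for $M=S^3$ it is automatic, for homology spheres one applies Lemma~\ref{IrrLem} to replace $M$ by a possibly different homology sphere whose graph exterior \emph{is} irreducible, and for general irreducible $M$ one splits off the case $\Gamma\subseteq$ ball (reducing to the $S^3$ case) and only then concludes irreducibility of $X$. Skipping this step is a real gap, and it matters because the JSJ theory you invoke next requires it.

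\textbf{Finite order on $\NG$ first is not justified and is not the paper's route.} The central step of your plan is to isotope $h$ so that $h|_{\NG}$ is finite order, and only afterward to standardize the exterior. You claim the no-sphere hypotheses and irreducibility force the edge-end permutations and the edge twisting to be ``compatible with a finite-order homeomorphism of $(\NG,\Gamma)$'', but you give no argument, and this is precisely where the difficulty lies: a realizing homeomorphism can drag knots along edges and twist around edge tubes in ways that have no finite-order representative on $(\NG,\Gamma)$ with the \emph{given} embedding. The paper does not try to make $h|_{\NG}$ finite order. Instead it uses the pared manifold $(X,P)$ with $P$ the union of the torus boundary components and the annuli $\partial'N(E)$, shows $\partial'N(V)$ is incompressible (this is where no type~I spheres is used), takes the characteristic annulus family $\A$, and carves out outermost regions $U_{e_i}$ (each meeting exactly two vertex balls; their existence uses no type~II spheres) and $F_{A_j}$ (meeting $\Gamma$ in a single vertex; uses no type~III spheres). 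The complementary piece $W$ is shown to be a simple pared manifold, Thurston's hyperbolization applies, and Mostow rigidity on the double gives a finite-order $f$ on $W$ homotopic (by Johansson, then isotopic by Waldhausen) to $h|_W$. The finite order on $\partial N(V)$ then comes for free because $W$ contains $\partial'N(V)$, and $f$ is extended radially into $N(V)$ and as a product map into \emph{fresh} tubes $D_i\times I$ that \emph{replace} the old $U_{e_i}$. Thus the paper's proof changes both the embedding of the edges and (by Dehn filling) the ambient manifold; your plan, which keeps $\NG$ and its embedding and tries to make $h$ finite order there directly, would founder on examples where $h$ knots the edges.

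\textbf{The reassembly claims.} You write that ``when $M=S^3$ \dots no regluing is forced, so $M'=S^3$'' and that the changes in gluing ``can be taken to act trivially on homology''. Both are incorrect as stated. In the paper, regluing \emph{does} occur: the boundary tori $R_j$ of $Y$ are Dehn filled along curves $\ell_j$. What saves the day is Lemma~\ref{TorusHS}, which produces a preferred longitude $\lambda_j$ on each $R_j$ bounding in the outside component, together with the observation that the finite-order isometry $f$ on $R_j$ can be made to preserve a curve $\ell_j$ isotopic to $\lambda_j$. Dehn filling along such longitudes replaces knot complements or solid tori by solid tori in a homologically trivial way, which returns $S^3$ in case~(1) and, by Remark~\ref{SewTorus}, a homology sphere in case~(2). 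Without Lemma~\ref{TorusHS} and without the claim that $f$ preserves $\{\lambda_j\}$ up to isotopy, the statement that the regluing acts trivially on homology is simply not established. Your proposal names the conclusion but not the mechanism, and the mechanism is the content.

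\textbf{A smaller point.} Your decomposition ``maximally compress $\partial X$ to an incompressible core $X_0$, then recover $M$ by attaching $1$-handles'' is not the paper's decomposition and would lose the graph structure that the pared-manifold formalism records (which annular pieces of $\partial X$ are edge tubes versus vertex sphere-with-holes). That bookkeeping is exactly what lets the paper identify the regions $U_{e_i}$ and $F_{A_j}$ and extend $f$ correctly into $N(V)$ and the new edge tubes, so it cannot be dispensed with.
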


Note that in contrast with Proposition 2.2, Theorem \ref{RigidHS} guarantees that $\sigma$ is induced by a finite order homeomorphism. 

\begin{remark} \label{Sph3C} \rm{Suppose that $\Gamma$ has a type I, II, or III sphere $S$ in a $3$-manifold $M$. Since each component of $M -S$ has non-empty intersection with $\Gamma$, removing the (one or two) vertices in $S \cap \Gamma$ along with any incident edges disconnects $\Gamma$.  Thus $\Gamma$ cannot be 3-connected.  Hence Theorem~\ref{RigidHS} holds for any $3$-connected graph, regardless of the embedding of the graph in $M$.}\end{remark}

Observe that the embedding on the left in Figure \ref{N3C} provides a counterexample to the converse of the above remark, where the graph is not even 2-connected but the embedding has no type I, II, or III sphere.  {Furthermore, the automorphism $(15)(26)(37)$ is induced on the embedding on the left by turning the figure over and then sliding the knot back to the left side.  However, this homeomorphism does not have finite order.  By contrast $(15)(26)(37)$ is induced on the embedding on the right by the involution which turns the graph over.  Note that the embedding on the right has a type I sphere.  This illustrates why in the conclusion of Theorem~\ref{RigidHS}, we cannot require that the new embedding have no spheres of type I, II, or III.

\begin{figure}[h]
\[	\includegraphics[scale=0.4]{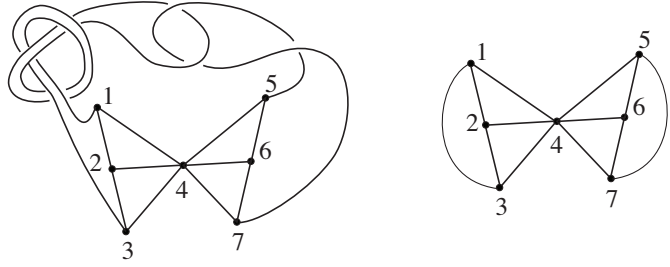}
\]
	\caption{ $(15)(26)(37)$ is realized by a homeomorphism (not of finite order) of the embedding on the left.  While $(15)(26)(37)$ is realized by a finite order homeomorphism of the embedding on the right.}
	\label{N3C}
\end{figure}

For the proof we will need two lemmas, the first of which makes use of the well known Half Lives, Half Dies Theorem with integer coefficients (see Hatcher's notes \cite{H}).

\begin{lemma}\label{TorusHS}
Let $M$ be a homology sphere containing a torus $T$, let $N_1$ and $N_2$ denote the closed up components of $M-T$, and let $i_*:H_1(T)\to H_1(N_1)$ and $j_*:H_1(T)\to H_1(N_2)$ be the inclusion maps.  Then there are simple closed curves $\lambda_1$ and $\lambda_2$ in $T$, which are unique up to isotopy, such that each $\lambda_k$ bounds a surface in $N_k$.  Furthermore,  $j_*(\lambda_1)$ and $i_*(\lambda_2)$ are generators of $H_1(N_2)=\Z$ and $H_1(N_1)=\Z$ respectively.
\end{lemma}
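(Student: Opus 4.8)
The plan is to derive the whole lemma from a single Mayer--Vietoris computation together with the Half Lives, Half Dies Theorem \cite{H}. Since $M$ is a homology sphere, $H_1(M)=H_2(M)=0$, and $T$ must separate $M$ (a nonseparating torus would carry a nonzero class in $H_2(M)$), which is exactly why $M-T$ has two components, closing up to $N_1,N_2$ with $T=\partial N_1=\partial N_2$. Applying reduced Mayer--Vietoris to $M=N_1\cup_T N_2$ yields the exact sequence
\[
0=H_2(M)\longrightarrow H_1(T)\xrightarrow{\,(i_*,\,j_*)\,}H_1(N_1)\oplus H_1(N_2)\longrightarrow H_1(M)=0,
\]
so $(i_*,j_*)$ is an isomorphism; in particular $H_1(N_1)\oplus H_1(N_2)\cong H_1(T)\cong\Z^2$, so each $H_1(N_k)$ is free abelian and their ranks sum to $2$.

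Next I would pin down these ranks using Half Lives, Half Dies applied to $N_1$ and $N_2$: it gives $\operatorname{rank}(\ker i_*)=\operatorname{rank}(\ker j_*)=1$, hence $\operatorname{rank}(\operatorname{im} i_*)=\operatorname{rank}(\operatorname{im} j_*)=1$, and combined with the previous step this forces $H_1(N_1)\cong H_1(N_2)\cong\Z$, which is the last clause of the lemma. Moreover $H_1(T)/\ker i_*\cong\operatorname{im} i_*$ is a rank-$1$ subgroup of $\Z$ and hence torsion-free, so $\ker i_*$ is a rank-$1$ direct summand of $H_1(T)\cong\Z^2$; thus $\ker i_*=\Z\gen{\lambda_1}$ for a primitive, in particular nonzero, class $\lambda_1$, and likewise $\ker j_*=\Z\gen{\lambda_2}$. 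A nonzero primitive class on a torus is represented by an essential simple closed curve which is unique up to isotopy, so this realizes $\lambda_1$ and $\lambda_2$ as curves. Since $i_*(\lambda_1)=0$, the standard fact that a null-homologous simple closed curve in the boundary of a compact orientable $3$-manifold bounds a properly embedded surface shows that $\lambda_1$ bounds a surface in $N_1$, and similarly $\lambda_2$ bounds a surface in $N_2$. For uniqueness, any essential simple closed curve in $T$ bounding a surface in $N_k$ is null-homologous in $N_k$, so represents a primitive element of the relevant kernel, hence $\pm\lambda_k$, and two essential curves on $T$ whose classes agree up to sign are isotopic.

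For the ``furthermore'' clause I would extend $\lambda_1$ to a basis $\{\lambda_1,\mu\}$ of $H_1(T)\cong\Z^2$. Since $(i_*,j_*)$ is an isomorphism onto $H_1(N_1)\oplus H_1(N_2)\cong\Z\oplus\Z$, the pair $\{(0,j_*\lambda_1),(i_*\mu,j_*\mu)\}$ is a $\Z$-basis of $\Z\oplus\Z$, so the determinant $-(i_*\mu)(j_*\lambda_1)$ equals $\pm1$; in particular $j_*(\lambda_1)$ is a generator of $H_1(N_2)\cong\Z$, and the symmetric argument shows $i_*(\lambda_2)$ generates $H_1(N_1)\cong\Z$. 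The step I expect to require the most care is the passage from the rational content of Half Lives, Half Dies to these integral conclusions---namely that each $H_1(N_k)$ is exactly $\Z$ and that $\ker i_*,\ker j_*$ are primitive direct summands, so the $\lambda_k$ genuinely exist as simple closed curves---which is handled precisely by feeding the rational rank count back into the integral Mayer--Vietoris isomorphism; the remaining ingredients (that $T$ separates, that a null-homologous boundary curve bounds a surface, and the classification of simple closed curves on the torus) are standard.
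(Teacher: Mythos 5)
Your proof is correct and uses the approach the paper indicates: the authors explicitly omit the proof, saying only that it ``makes use of the well known Half Lives, Half Dies Theorem,'' which is exactly the engine of your argument (Mayer--Vietoris to get $H_1(T)\cong H_1(N_1)\oplus H_1(N_2)$, then Half Lives, Half Dies to pin down the ranks and identify the kernels as primitive rank-one summands). The one point worth stating explicitly is that the ``unique up to isotopy'' claim, both in the lemma and in your argument, tacitly restricts to \emph{essential} simple closed curves on $T$, since an inessential curve also bounds in each $N_k$; your identification of $\lambda_k$ with a primitive homology class handles this correctly.
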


We omit the proof of this lemma since it is quite standard.

\medskip

\begin{remark} \label{SewTorus} \rm{
As a consequence of Lemma \ref{TorusHS}, if we sew a solid torus to $N_1$ along $T$ by gluing its meridian along the generator $\lambda_2$, the resulting closed $3$-manifold is a homology sphere.
}\end{remark}

\medskip

\begin{lemma} \label{IrrLem}
Let $\Gamma$ be an embedding of a graph $G$ in a homology sphere $M$ which does not have any spheres of type I, II, or III, and suppose that $\sigma$ is an automorphism of $G$ that is realized by a homeomorphism $h$ of $(M,\Gamma)$. Then there exists an embedding $\Gamma'$ of $G$ in a homology sphere $M'$ and a homeomorphism $f$ of $(M',\Gamma')$ such that
\begin{enumerate}[label=(\roman*), topsep=-5pt,itemsep=0pt]
\item $\cl(M' - N(\Gamma'))$ is irreducible;\par
\item $\Gamma'$ does not have any spheres of type I, II, or III in $M'$;\par
\item $f$ realizes $\sigma$;\par
\item $f$ is orientation reversing if and only if $h$ is orientation reversing.
\end{enumerate}
\end{lemma}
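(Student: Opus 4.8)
The plan is to repeatedly surger $M$ along essential spheres in $\cl(M - \NG)$ until the complement becomes irreducible, keeping track of the homeomorphism $h$ throughout. First I would consider the collection $\mathcal{S}$ of all essential (non-bounding) 2-spheres in $X := \cl(M - \NG)$. If $X$ is already irreducible we are done with $M' = M$, $\Gamma' = \Gamma$, $f = h$, since hypotheses (i)--(iv) hold trivially. Otherwise, pick an essential sphere $S$ in $X$. Since $M$ is irreducible, $S$ bounds a ball $B$ in $M$, and because $S$ is essential in $X$, the ball $B$ must contain part of $\NG$; in fact, since $S \cap \Gamma = \varnothing$, the ball $B$ contains a union of connected components of $\NG$ — but $\Gamma$ is connected, so $B$ contains a sub-handlebody corresponding to a "piece" of the spine separated off by $S$. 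The key geometric observation is that $S$, being disjoint from $\Gamma$ and essential in the complement of a handlebody neighborhood, cuts off a summand of the handlebody; equivalently, $S$ witnesses that $\NG$ — and hence the embedding — decomposes as a connected sum along $S$. I would then cut $M$ along $S$ and cap off both sides with balls, obtaining a new manifold; the side containing $\Gamma$ gives the new ambient manifold $M_1$ with the (isotoped) embedding $\Gamma_1$, and I must check $M_1$ is again a homology sphere. For this I would use a Mayer--Vietoris argument: since $S$ is a sphere and $H_*(M)$ is that of $S^3$, capping off along $S$ preserves the homology-sphere property (each summand of a homology sphere along a 2-sphere is a homology sphere, as $\widetilde{H}_*(S^2) = 0$ forces the homology to split).

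The delicate point, and what I expect to be the main obstacle, is carrying the symmetry $h$ through the surgery. The homeomorphism $h$ need not preserve the chosen sphere $S$, so I cannot simply restrict it. The standard fix is to work $h$-equivariantly: instead of a single $S$, I would take the full $h$-orbit $\{S, h(S), h^2(S), \dots\}$ of essential spheres. Here one invokes the equivariant sphere theorem / the fact that $h$ has finite order is \emph{not} available (indeed $h$ may be infinite order), so instead I would argue more carefully — but note that $\cl(M-\NG)$ has a canonical (Kneser--Milnor) prime decomposition, and $h$ permutes the prime summands of $X$ since it restricts to a homeomorphism of $X$ (as $h(\Gamma) = \Gamma$ implies $h(\NG)$ is isotopic to $\NG$, and we may assume $h(\NG) = \NG$). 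Thus $h$ acts on the finite set of $S^2 \times S^1$ and fake-ball summands, and I can choose the surgering spheres to form an $h$-invariant system realizing this prime decomposition. Surgering along the whole invariant system simultaneously then yields $M'$ with $\cl(M' - N(\Gamma'))$ irreducible, and $h$ descends to a homeomorphism $f$ of $(M', \Gamma')$; since surgery along an invariant sphere system is compatible with the group action and $f$ restricts to $h$ on $\NG \supseteq \Gamma$, conditions (iii) and (iv) follow immediately, orientation behavior being inherited.

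Finally I would need to verify condition (ii): that $\Gamma'$ has no type I, II, or III sphere in $M'$. Here I would argue by contradiction — a type I, II, or III sphere $S'$ for $\Gamma'$ in $M'$ would, after tubing along the part of $S'$ lying inside $\NG$ and examining the incompressible annulus $S' \cap \cl(M' - N(\Gamma'))$, pull back through the surgery to produce a corresponding type I, II, or III sphere for $\Gamma$ in $M$, contradicting the hypothesis on $\Gamma$. The incompressibility clauses in the definitions of type II and III spheres are what make this pullback work: an essential sphere we surgered along is, by construction, disjoint up to isotopy from an incompressible annulus in the complement, so the type-II/III sphere survives the reverse operation. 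I expect the bookkeeping in this last step — matching up the annuli and checking incompressibility is preserved under capping — to be somewhat technical but routine given the earlier setup, whereas the genuinely substantive issue remains making the sphere system $h$-equivariant without assuming $h$ is of finite order.
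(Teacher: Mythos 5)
There is a genuine gap at exactly the point you identify as the substantive issue, and the paper's proof resolves it with a tool you don't supply. You assert that because $h$ permutes the prime summands of $\cl(M-\NG)$, you ``can choose the surgering spheres to form an $h$-invariant system realizing this prime decomposition.'' That claim is not justified and is not easy: the prime-decomposition sphere system is unique only up to isotopy, so $h$ carries your chosen system $\SS$ to an isotopic but generally different system $h(\SS)$, and since $h$ may have infinite order you cannot average over a finite orbit or invoke equivariant sphere theorems. The paper sidesteps the need for an $h$-invariant system entirely: it applies Bonahon's Theorem~3.1 of \cite{B02} to produce the sphere system $\SS$ (in which exactly one closed-up complementary piece, $M_1$, carries $\partial\NG$), and then uses the \emph{stronger} uniqueness statement from Appendix~A of \cite{B83} to obtain an orientation-preserving homeomorphism $h'$ of $\cl(M-\NG)$ that pointwise fixes $\partial\NG$ and satisfies $h'(h(\SS))=\SS$. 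One then replaces $h$ by $h'\circ h$, which preserves $\SS$ (and hence $M_1$) and still agrees with $h$ on $\partial\NG$, so it extends over the capping ball and then over $\NG$ to give the desired $f$. In other words, instead of making $\SS$ invariant under $h$, one corrects $h$ so that it preserves $\SS$; that is the move you are missing.

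There is also a smaller but real error at the start of your argument: you write ``Since $M$ is irreducible, $S$ bounds a ball $B$ in $M$,'' and go on to speak of $B$ containing ``a union of connected components of $\NG$'' and a ``sub-handlebody.'' The hypothesis is only that $M$ is a homology sphere, not that it is irreducible --- indeed the whole point of the lemma is to repair the reducible case, and a reducible homology sphere (e.g.\ a connected sum of Poincar\'e spheres) contains essential spheres bounding no ball. Moreover, $\Gamma$ is connected, so $\NG$ is a connected handlebody and a sphere disjoint from $\NG$ cannot separate off a ``piece of the spine''; $\NG$ lies entirely on one side. Your subsequent decision to cut along $S$ and cap both sides with balls is the right move and does not actually need $S$ to bound a ball, so this error is somewhat self-correcting, but the preceding two sentences as written are false. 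Your Mayer--Vietoris argument that $M'$ remains a homology sphere, and your contradiction argument for condition (ii), are both in the same spirit as the paper's (the paper isotopes a putative type I/II/III sphere for $\Gamma'$ off the attached ball into $M_1\cup\NG\subseteq M$ and observes it must separate since $H_2(M)=0$); those parts are essentially fine once the equivariance gap is filled.
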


\begin{proof}
Let $\NG$ be the neighborhood of $\Gamma$ in $M$ defined in Section \ref{3M}. Since $\Gamma$ is setwise invariant under $h$, we can modify $h$ by isotopy (and still refer to the resulting map as $h$) such that $h(\NG) = \NG$, $h(N(v)) = N(\sigma(v))$ for each vertex $v$, and $h(N(e)) = N(\sigma(e))$ for each edge $e$. As a result, $h$ restricts to a homeomorphism of $(\cl(M-\NG), \partial \NG)$. 

We now apply Theorem 3.1 of \cite{B02} to $\cl(M-\NG)$ to get a finite family $\SS$ of disjoint separating spheres, which decompose $\cl(M-\NG)$ into a connected sum of prime $3$-manifolds such that the set of summands is unique up to diffeomorphism.  Furthermore, the closure of one component of $M-\NG -\SS$ is a $3$-sphere with a finite number of open balls removed, and the closures of the other components each contain only one element of $\SS$ in their boundary.  Let $M_1$ denote the closure of the component of $M-\NG -\SS$ that contains $\partial \NG$, and let $M_2$ denote the irreducible $3$-manifold formed from $M_1$ by filling its single spherical boundary component with a ball $B$.

Since $h$ is a homeomorphism of $(\cl(M-\NG), \partial \NG)$, the family of spheres $h(\SS)$ also provides a prime decomposition of $\cl(M-\NG)$ with the same properties as $\SS$. Now since $\cl(M-\NG)$ is orientable, Theorem 3.1 of \cite{B02} implies that $\cl(M-\NG)$ has the stronger uniqueness property (proved in Appendix A of \cite{B83}) that tells us there is an orientation preserving homeomorphism $h'$ of $\cl(M-\NG)$ such that $h'(h(\SS)) = \SS$ and $h'$ pointwise fixes $\partial \NG$.  Hence $h'\circ h$ coincides with $h$ on $\partial \NG$.  Also, since $M_1$ contains $\partial \NG$, we must have $h'(h(M_1))=M_1$.  We can now extend $(h' \circ h)|_{M_1}$ radially into the ball attached at the spherical boundary component of $M_1$ to obtain a homeomorphism $h_2$ of $M_2$ such that $h_2$ is orientation reversing if and only if $h$ is orientation reversing and $h_2$ coincides with $h$ on $\partial \NG$.

We obtain a closed $3$-manifold $M'$, by attaching $\NG$ to $M_2$ along $\partial \NG = \partial M_2$ as it was originally attached to $M_1$.  We show below that $M'$ is a homology sphere. Note that $M_1 \cup \NG$ is a closed up component of $M - (\partial M_1 - \partial \NG)$. Now consider the Mayer-Vietoris sequence applied to $M$ and subspaces $M_1 \cup \NG$, $\cl(M - (M_1 \cup \NG))$:
\[	\cdots  \to H_1(\partial M_1 - \partial \NG) \to H_1(M_1 \cup \NG) \oplus H_1(\cl(M- (M_1 \cup \NG))) \to H_1(M) \to \cdots.	\]
Since $H_1(M) = 0$ and $\partial M_1 - \partial N(\Gamma)$ consists of spheres, we have $H_1(M_1 \cup \NG) = 0$. As $M'$ can be obtained from $M_1 \cup \NG$ by filling its boundary components with balls, it follows that $H_1(M') = 0$. By Poincar\'e duality, $H_2(M') = 0$, and thus $M'$ is a homology sphere.  Furthermore, $\cl(M' - N(\Gamma'))=M_2$ is irreducible.

Our embedding $\Gamma$ of $G$ in $M$ induces an embedding $\Gamma'$ of $G$ in $M'$. Suppose for the sake of contradiction that $\Gamma'$ has a sphere $S$ of type I, II, or III in $M'$. We can isotop $S$ off of the ball $B$ to obtain a sphere $S'$ of type I, II, or III for $\Gamma'$ that is contained in $M_1 \cup \NG$, and hence in $M$.  Since $M$ is a homology sphere $H_2(M)=0$, and hence $M$ cannot contain a non-separating sphere.  Thus $S'$ is a separating sphere in $M$.  It follows that $S'$ is a type I, II, or III sphere for $\Gamma$ in $M$, contrary to hypothesis.

Finally, since $h_2$ coincides with $h$ on $\partial \NG$, we can extend $h_2$ to $M'$ according to how $h$ maps $\NG$ to itself to obtain a homeomorphism $f$ of $(M', \Gamma')$ that realizes $\sigma$. Note that $f$ is orientation reversing if and only if $h$ is orientation reversing.
\end{proof}\medskip

For convenience, we introduce some notation based on the construction of the neighborhood $N(\Gamma)$ given in Section~2.  For each $v \in V$, let $\partial'N(v)$ denote the sphere with holes $\partial N(v) \cap \partial \NG$. For each $e \in E$, let $\partial'N(e)$ denote the annulus $\partial N(e) \cap \partial \NG$. Finally, let $\partial'N(V) = \bigcup_{v \in V} \partial'N(v)$ and $\partial' N(E) = \bigcup_{e \in E} \partial'N(e)$. Then $\partial \NG = \partial'N(V) \cup \partial'N(E)$.

\begin{proof}[Proof of Theorem~\ref{RigidHS}]
Since the proofs of the three parts are lengthy but similar, we will simultaneously prove all three parts and point out the differences among the three parts along the way.  To begin with, we let $\NG$ be the neighborhood of $\Gamma$ in $M$ defined in Section \ref{3M}. Regardless of whether we are in Part (1), (2), or (3), if $G$ consists of a single edge, then we can embed $G$ as a line segment in $S^3$ and $\sigma$ will be induced by a finite order homeomorphism of $S^3$.  Thus we assume $G$ has at least two edges.  Recall from Remark \ref{SphVal} that $G$ cannot have any vertices of valence less than 3. It follows that the handlebody $\NG$ has genus at least 2.

For part (1) of the Theorem, since $G$ is connected, $\cl(M-\NG) = \cl(S^3-\NG)$ is irreducible. For part (2), using Lemma \ref{IrrLem} to change the homology sphere $M$ if necessary, we can assume without loss of generality that $\cl(M - \NG)$ is irreducible. For part (3), first suppose that $\Gamma$ is contained in a ball $B$ in $M$. We can choose $B$ so that it is invariant under the homeomorphism $h$ of $(M,\Gamma)$.  Then we can embed $\Gamma\subseteq B$ in $S^3$, to get an embedding $\Gamma'$ of $G$ in $S^3$ that does not have any spheres of type I, II, or III. Now, applying the argument of the proof of Proposition \ref{Aut3M}, we can use $h|_B$ to construct a homeomorphism $h'$ of $(S^3, \Gamma')$ that realizes $\sigma$. In this case, it suffices to prove part (1) of the Theorem. Thus, we will assume for part (3) that $\Gamma$ is not contained in a ball in $M$. Since $M$ is irreducible, this means that $\cl(M-\NG)$ is also irreducible.

Hence for all three parts of the Theorem, we can assume that $\cl(M-\NG)$ is irreducible. Thus we can apply the JSJ Characteristic Tori Decomposition (see \cite{JS79}, \cite{J79}) to get a minimal family $\TT$ of disjoint, incompressible tori in $\cl(M - \NG)$ such that each closed up component of $\cl(M - \NG) - \TT$ is Seifert fibered or atoroidal. Moreover, the family $\TT$ is minimal and unique up to isotopy.  Thus we can modify $h$ by an isotopy (and abuse notation by again referring to the resulting map also as $h$) such that $h(\TT) = \TT$, $h(\Gamma) = \Gamma$, $h(N(v)) = N(\sigma(v))$ for each $v \in V$, and $h(N(e)) = N(\sigma(e))$ for each $e \in E$. Now let $X$ be the closed up component of $\cl(M-\NG) - \TT$ that contains $\partial \NG$. Then $h(X) = X$.

Since $\partial \NG$ has genus at least 2, $X$ cannot be Seifert fibered and hence is atoroidal. We show as follows that $X$ is irreducible. Since $\cl(M - \NG)$ is irreducible, it suffices to consider the case $X \neq \cl(M - \NG)$.  Thus we assume that $\TT$ is non-empty. Let $S$ be a sphere in $X\subseteq \cl(M - \NG)$. Then the irreducibility of $\cl(M - \NG)$ implies that $S$ splits $\cl(M - \NG)$ into two closed up components, one of which is a ball.  One such component contains $\cl((M - \NG)- X)$ and the other such component is contained in $X$.  The component containing $\cl((M - \NG)- X)$ must contain some torus in $\TT$.  Since these tori are incompressible, $\cl((M - \NG)- X)$ cannot be a ball.  This implies that the closed up component of $\cl(M - \NG) - S$ contained in $X$ must be a ball. Thus $X$ is irreducible.

Now, let $P$ denote the union of the torus boundary components of $X$ together with the annuli in $\partial'N(E)$. Then, $P \subseteq \partial X$ and $\cl(\partial X -P) = \partial' N(V)$ consists of spheres with holes. Suppose, for the sake of contradiction, that there is a compressing disk $D$ for $\partial X-P$ in $X$. Then $\partial D$ is a non-trivial loop in $\partial'N(v)$, for some vertex $v \in V$. Hence the two components of $\partial'N(v) - \partial D$ each have at least one hole through which an edge of $\Gamma$ passes. Thus, if we take a disk $D'$ in $N(v)$ such that $\partial D' = \partial D$ and $D' \cap \Gamma = \{v\}$, then the sphere $D \cup D'$ would be a type I sphere of $\Gamma$, which cannot exist by our hypothesis. It follows that $\partial X-P$ is incompressible.

Since $X$ is irreducible and $\partial X-P$ is incompressible, we can apply the JSJ Characteristic Submanifold Decomposition for Pared Manifolds (see \cite{JS79}, \cite{J79}) to the pared manifold $(X,P)$ to obtain a minimal family $\A$ of incompressible tori and annuli in $X$ with boundaries in $\partial X-P$ such that for each closed up component $W$ of $X - \A$, the pared manifold $(W, W \cap (P \cup \A))$ is either simple, Seifert fibered, or $I$-fibered. Since $X$ is atoroidal, $\A$ cannot contain any tori. By the uniqueness of $\A$ up to isotopy, this means that we can again modify $h$ by an isotopy (and, by an abuse of notation, refer to the resulting map as $h$) so that $h(\A) = \A$.

Let $A$ be one of the annuli in $\A$.  Suppose, for the sake of contradiction, that there is a compression disk for some $A$ in $\cl(M - \NG)$. Since $A$ is incompressible in $X$, this disk must meet some torus in $\TT$ non-trivially, giving us a compression disk for the torus in $\cl(M-\NG)$. This contradiction shows that $A$ is incompressible in $\cl(M-\NG)$.

Suppose that there are distinct vertices $v_1$ and $v_2$ such that one component of $\partial A$ is contained in $\partial' N(v_1)$ and the other is contained in $\partial' N(v_2)$.  Let $D_1\subseteq N(v_1)$ and $D_2\subseteq N(v_2)$ be disks intersecting $\Gamma$ at $v_1$ and $v_2$ respectively, such that $\partial A = \partial D_1 \cup \partial D_2$. Then $S=A \cup D_1 \cup D_2$ is a sphere that separates $M$ into closed up components $U$ and $U'$. If both $U$ and $U'$ contain at least two edges, $S$ would be a type II sphere, contrary to hypothesis. Thus, without loss of generality, $\Gamma\cap U$ contains at most one edge.  However, since $A$ is incompressible, in fact, $\Gamma\cap U$ consists of exactly one edge $e$.

Now since $e$ is the only edge of $\Gamma$ whose vertices are $v_1$ and $v_2$, we can define the set $\A_e$ of all annuli in $\A\cup P$ with one boundary in $\partial N(v_1)$ and the other boundary in $\partial N(v_2)$.  We can then cap off each annulus in $\A_e$ with disks in $\partial N(v_1)$ and $\partial N(v_2)$ to obtain a collection of spheres.  Since $M$ is either $S^3$, a homology sphere, or an irreducible manifold, each such sphere separates $M$ into two closed up components; and the set of such components which contain $e$ are nested.  Thus we can let $U_e$ denote the outermost component with respect to this nesting.

We do the above construction for each annulus in $\A\cup P$ with boundaries in distinct components of $\partial N(V)$ to obtain a collection of solids $U_{e_1}$, \dots, $U_{e_n}$ which each meet precisely two components of $\partial N(V)$. Observe that for every edge $e_i$, the annulus $\partial'N(e_i)$ is in $P$, and hence every edge $e_i$ in $\Gamma$ is contained in some $U_{e_i}$.  Also, every $\partial U_{e_i}$ meets $\Gamma$ only in the two points of $e_i\cap \partial N(V)$.

Next suppose that for some vertex $v$, the boundaries of the annulus $A$ are both in $\partial N(v)$ and $A$ is not contained in one of the $U_{e_i}$.  Let $D_1$ and $D_2$ be disks in $N(v)$ such that $\partial A = \partial D_1 \cup \partial D_2$ and for each $i$ we have $D_i\cap \Gamma=\{v\}$. Then $A\cup D_1 \cup D_2$ is a pinched sphere that separates $M$ into closed up regions $F_A$ and $F'_A$. If both $F_A$ and $F'_A$ contain at least one edge, then this pinched sphere would be a type III sphere.  Thus without loss of generality, $F_A\cap \Gamma=\{v\}$.  Now the boundary components of $A$ divide $\partial N(v)$ into two disks and an annulus.  If either of these disks were disjoint from $\Gamma$ then $A$ would be compressible in $X$.  Since $F_A\cap \Gamma=\{v\}$, it follows that $F_A\cap \partial N(v)$ is an annulus which is disjoint from $\Gamma$.  In other words, the boundary components of $A$ co-bound an annulus in $\partial'N(v)$.

We now cap off all of the annuli in $\A$ which have both boundaries in $\partial N(v)$ and are not contained in any $U_{e_i}$ to obtain a collection of pinched spheres which bound $3$-dimensional regions whose intersection with $\Gamma$ consists of the single vertex $v$.  Since the set of such regions containing a given annulus $A$ are nested, we can choose $A\in\A$ such that $F_A$ is an outermost such region.  We do this construction for all those annuli in $\A$ with both boundaries in the same component of $\partial N(V)$ which are not contained in any of the $U_{e_i}$.

In this way, we obtain outermost regions $F_{A_1}$, \dots, $F_{A_m}$ and $U_{e_1}$\dots $U_{e_n}$ such that all of the annuli in $\A\cup P$ are contained in $U_{e_1}\cup\dots\cup U_{e_n}\cup F_{A_1}\cup \dots\cup F_{A_m}$.  Also, the regions $F_{A_1}\cap X$, \dots, $F_{A_m}\cap X$ and $U_{e_1}\cap X$, \dots, $U_{e_n} \cap X$ are pairwise disjoint.  See Figure~\ref{annuli}.

\begin{figure}[h]
	\begin{center}
		\includegraphics{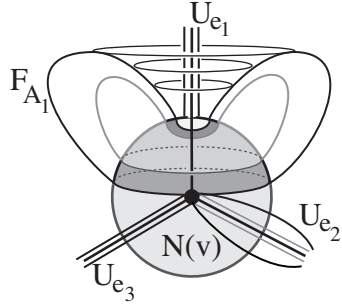}
	\end{center}
	\caption{An illustration of some of the outermost regions $U_{e_i}$ and $F_{A_j}$.}
	\label{annuli}
\end{figure}

Let $W=\mathrm{cl}(X-(U_{e_1}\cup\dots\cup U_{e_n}\cup F_{A_1}\cup \dots\cup F_{A_m}))$, as illustrated in Figure~\ref{Wnew} where $T$ is one of the torus boundary components of $X$. Observe from our construction that $W$ is the closure of a single component of $X-(\A\cup P)$.  Hence by the JSJ Decomposition  for pared manifolds, the pared manifold $(W, W \cap (P \cup \A))$ is either simple, Seifert fibered, or $I$-fibered.

Since the boundary components of the annuli in $\A\cup P$ are pairwise disjoint curves in $\partial N(V)$, no component of $X\cap \partial N(V)$ is entirely contained in $U_{e_1}\cup\dots\cup U_{e_n}\cup F_{A_1}\cup \dots\cup F_{A_m}$.  Thus, $W$ meets every component of $\partial N(V)$.  Also, for every pair of vertices $v_1$ and $v_2$ which are joined by an edge, there is an annulus in $\partial W$ with boundaries in $\partial N(v_1)\cap W$ and $\partial N(v_2)\cap W$.  Since $\partial \NG$ has genus at least 2, this means that the boundary component of $W$ which meets $\partial \NG$ has genus at least 2.  It follows that $(W, W \cap (P \cup \A))$ is not Seifert fibered.

\begin{figure}[h]
	\begin{center}
		\includegraphics{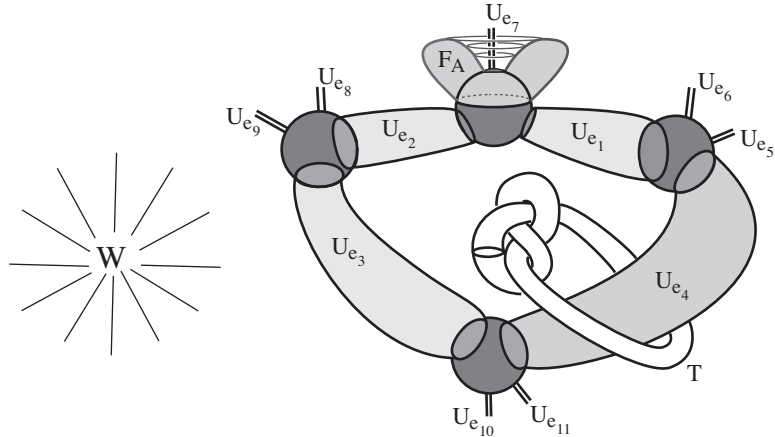}
	\end{center}
	\caption{$W=\mathrm{cl}(X-(U_{e_1}\cup\dots\cup U_{e_n}\cup F_{A_1}\cup \dots\cup F_{A_m}))$.}
	\label{Wnew}
\end{figure}

Next suppose that $(W, W \cap (P \cup \A))$ is $I$-fibered as a pared manifold. Then there is an $I$-bundle map of $W$ over a base surface $S$ such that $W \cap (P \cup \A)$ is in the pre-image of $\partial S$. This implies that $S$ must be homeomorphic to a component of $\partial' N(V) \cap W$. Hence $S$ is a sphere with holes. Since $W$ is orientable and is $I$-fibered over the orientable surface $S$, it follows that $W$ is homeomorphic to $S \times I$. Hence $W \cap (P \cup \A) = \partial S \times I$, and $S \times \{0\}$ and $S \times \{1\}$ are the only components of $\partial'N(V) \cap W$. But we saw above that $W$ meets every component of $\partial N(V)$.  Thus $G$ has at most two vertices, contradicting our earlier observation that every vertex of $G$ has valence at least $3$. Hence $(W, W \cap (P \cup \A))$ cannot be $I$-fibered.

It now follows that the pared manifold $(W, W \cap (P \cup \A))$ is simple. We can thus apply Thurston's Hyperbolization Theorem \cite{T82} to $(W, W \cap (P \cup \A))$ to equip $W - (P \cup \A)$ with a finite volume complete hyperbolic metric with totally geodesic boundary.

Recall that $h(X) = X$ and $h(\A) = \A$. Observe from our construction of $F_{A_1}$, \dots, $F_{A_m}$ and $U_{e_1}$\dots $U_{e_n}$ that $W$ is the only closed up component of $X - \A$ that meets more than two components of $\partial N(V)$. Thus we must have $h(W) = W$, and hence $h$ restricts to a homeomorphism of $(W, W \cap (P \cup \A))$. 

Let $D$ denote the double of $W - (P \cup \A)$ along its boundary.  Then $D$ is a finite volume hyperbolic manifold, and we can extend $h$ to a homeomorphism $h'$ of $D$. Using the work of Thurston \cite{Thu}, we can now apply Mostow's Rigidity Theorem \cite{M73} to $D$, to obtain a finite order isometry $f'$ of $D$ such that $f'$ is homotopic to $h'$ in $D$.  

Let $\varphi$ be the isometric involution that interchanges the two halves of $D$. By construction, $h'$ commutes with $\varphi$. The isometry $f'$ will also commute with $\varphi$, because the commutator $f' \varphi (f')^{-1} \varphi^{-1}$ is an isometry that is homotopic to the identity, and therefore must be the identity. As a consequence, $f'$ sends the fixed point set of $\varphi$ to itself, namely $f'$ sends the boundary of $W - (P \cup \A)$ to itself. Since $f'$ cannot interchange the two halves of $D$ for homotopy reasons, it must send $W$ to itself. We can now restrict $f'$ to $W$, to obtain a homeomorphism $f$ of $(W, W \cap (P \cup \A))$ that has finite order and is homotopic to $h$ by a homotopy $H$ of $D$. 

 In order to prove that $f$ is homotopic to $h$ by a homotopy of $W$, we consider the fundamental group of $W$.  In particular, let $x_0$ be a base point in $W$, and let $\ell\subseteq D$ be the path from $h(x_0)$ to $f(x_0)$ which is induced by $H$.  Now $h_*:\pi_1(W,x_0)\to\pi_1(W, h(x_0))$ and $f_*:\pi_1(W,x_0)\to\pi_1(W, f(x_0))$ are conjugate by $\ell$ in $D$. However, because $(W, W \cap (P \cup \A))$ is anannular, the free homotopy between $h(\gamma)$ and $f(\gamma)$ in $D$ can be homotoped inside $W$ by Johansson \cite{J79}.  Therefore the homotopy $H$ can be chosen such that $\ell$ is contained in $W$, and hence $h_*$ and $f_*$ are actually conjugate by $\ell$ in $W$.  Since $\pi_2(W)$ and $\pi_3(W)$ are trivial, it follows that $f$ and $h$ are homotopic in $W$. 

Now, $f$ induces isometries on the collection of tori and annuli in $W\cap (P\cup \A )$ with respect to a flat metric \cite{Thu}.  Finally, Waldhausen's Isotopy Theorem \cite{W68} ensures that $f$ is actually isotopic to $h$ by an isotopy of $W$ that leaves $W \cap (P \cup \A)$ setwise invariant.

Below, we fill in the boundary components of $W$ and extend $f$ to a finite order homeomorphism of the closed manifold we obtain.  We will abuse notation throughout by referring to the extended maps as $f$.  We begin by extending $f$ to $N(V)$ as follows.  Observe that the components of $\partial N(V)-W$ consist of disks in the $U_{e_i}$ which each meet $\Gamma$ in a single point and annuli in the $F_{A_j}$ which are disjoint from $\Gamma$.  We extend $f$ radially within these disks.  For the annuli in $\partial N(V)-W$, recall that $f$ is a finite order isometry of the annuli $A_1$, \dots, $A_m$ taking the set $\{\partial A_1,\dots, \partial A_m\}$ to itself.  Thus we can extend $f$ to a finite order isometry of the annuli in $\partial N(V)-W$.  In this way,  we have extended $f$ to a finite order homeomorphism of $\partial N(V)$. Thus we can now extend $f$ radially to $N(V)$, re-embedding $N(V)\cap \Gamma$ if necessary, so that $f$ takes $N(V)\cap \Gamma$ to itself and $f$ is still of finite order.  Furthermore, since $f$ is isotopic to $h$ on $\partial W$, it follows from our constructions of these extensions that $f$ is isotopic to $h$ on $W\cup N(V)$.

Next, for each $i$, we let $S_{e_i}$ denote the sphere $\partial(\mathrm{cl}(U_{e_i}-N(V)))$.  For each $i$, let $D_i$ be a disk and attach the solid tube $D_i\times I$ to $S_{e_i}$ such that $D_i\times \{0\}$ and $D_i\times \{1\}$ are the components of $S_{e_i}\cap \partial N(V)$.  Now, let $C_i$ denote a core of the solid tube $D_i\times I$ whose endpoints coincide with the points of $ \partial N(V)\cap \Gamma$.  Then $\Gamma'=(N(V)\cap \Gamma)\cup C_1\cup\dots\cup C_n$ is an embedding of $\Gamma$ in $Y=W\cup N(V)\cup (D_1\times I) \cup\dots\cup (D_n\times I)$ and $N(\Gamma')= N(V)\cup (D_1\times I) \cup\dots\cup (D_n\times I)$ is a neighborhood of $\Gamma'$ in $Y$.  Thus $Y=W\cup N(\Gamma')$.

Since $f$ takes the sets $\{U_{e_1}, \dots, U_{e_n}\}$ and $N(V)$ to themselves, we can extend $f$ as a product map into the $D_i\times I$ so that $f$ takes the set $\{C_1,\dots, C_n\}$ to itself and $f$ is still of finite order.  Also, since $W$ meets every component of $N(V)$, and $f$ is isotopic to $h$ on $W$, $f$ induces the same permutation of the components of $N(V)$ as $h$.  Thus $f$ induces the automorphism $\sigma$ on $\Gamma'$.

From here, we make separate arguments for the different parts of the theorem.  For part (3), we obtain the manifold $M'$ by taking the double of $Y$ along its boundary, and extending $f$ in the natural way.  Then $f$ has finite order, induces $\sigma$ on $\Gamma'$, and is orientation reversing if and only if $h$ is.

Next we assume that $M$ is $S^3$ or a homology sphere. Recall that for each $j$, $\partial F_{A_j}$ is a pinched sphere meeting $\partial' N(V)$ in an annulus.  Thus $\partial (\mathrm{cl}(F_{A_j}-N(V)))$ is a torus consisting of the union of the annulus $A_j$ and the annulus in $\partial' N(V)$.  Let $R_1, R_2, \dots, R_p$  denote the boundary components of $Y$.  Then all of the $R_i$ are tori, with some contained in $\TT$ and others of the form $\partial (\mathrm{cl}(F_{A_j}-N(V)))$.

Now we apply Lemma \ref{TorusHS} to each torus $R_j$ in $M$, to obtain a simple closed curve $\lambda_j$  which is unique up to isotopy in $R_j$, and such that $\lambda_j$ bounds a surface in the closed up component of $M - R_i$ that is disjoint from $\NG$. Observe that since $h$ leaves $\TT$ and $\A$ setwise invariant, $h$ also leaves the set $\{R_1,\dots,R_p\}$ invariant.  Thus $h$ takes the set of curves $\{\lambda_1,\dots, \lambda_p\}$ to an isotopic set of curves on the tori $R_1$, \dots, $R_p$.  Now since $f$ is isotopic to $h$ on $W\cup N(V)$, it follows that $f$ also takes the set of curves $\{\lambda_1,\dots, \lambda_p\}$ to an isotopic set of curves on the tori $R_1$, \dots, $R_p$.  However, since $f$ is a finite order isometry of $R_1$, \dots, $R_p$, there are simple closed curves $\ell_1$, \dots, $\ell_p$ on the tori $R_1$, \dots, $R_p$ respectively, such that each $\ell_j$ is isotopic to $\lambda_j$ and $f$ takes the set $\{\ell_1, \dots, \ell_p\}$ to itself.  For each $j$, we sew in a solid torus $Z_j$ onto $Y$ along $R_j$ by gluing its meridian along $\ell_j$. In this way, we obtain a closed manifold $M'$.

For part (1), we review our construction of $M'$, in order to see that $M'=S^3$.  Recall that starting with $W\cup N(V)\subseteq M=S^3$, we attached balls $D_1\times I$, \dots, $D_n\times I$ to the spheres $S_{e_1}$, \dots, $S_{e_n}$, and then attached solid tori $Z_1$, \dots, $Z_p$, to the tori $R_1$, \dots, $R_p$ by gluing meridians of $Z_1$, \dots, $Z_p$ to the curves $\ell_1$, \dots, $\ell_p$.  Since every sphere in $S^3$ bounds a ball on both sides, $Y=W\cup N(V)\cup (D_1\times I) \cup\dots\cup (D_n\times I)\subseteq S^3$.  Now each of the tori $R_1$, \dots, $R_p$ either bounds a knot complement or a solid torus in $S^3-Y$.  Also, since each $\ell_j$ is isotopic to $\lambda_j$, it bounds a surface in $S^3-Y$.  Hence replacing these knot complements or solid tori, by the solid tori $Z_1$, \dots, $Z_p$ to $\partial Y$ by gluing their meridians to the curves $\ell_1$, \dots, $\ell_p$ again yields $S^3$.  Thus $M'=S^3$.

Finally, for part (2), we suppose that $M$ is a homology sphere.  Then every sphere in $M$ bounds a homology ball (which may also be a ball) on both sides.  Whether or not a given ball $D_i\times I$ replaces a ball or a homology ball in $M-(W\cup N(V))$, it follows that $Y=W\cup N(V)\cup (D_1\times I) \cup\dots\cup (D_n\times I)$ is contained in a  homology sphere (which may actually be $S^3$).  Now by applying Remark \ref{SewTorus} we see that adding each solid torus $Z_i$ by gluing its meridian to $\ell_i$, again gives a homology sphere.\end{proof}\medskip

Now we use Part (3) of Theorem~\ref{RigidHS} to prove Proposition \ref{K7}.

\begin{proposition}\label{K7} Let $\sigma$ denote the automorphism $(123)$ of the complete graph $K_7$ with vertices numbered $1$, \dots, $7$.  Then $\sigma$ is not realizable in any orientable, closed, connected, irreducible $3$-manifold.
\end{proposition}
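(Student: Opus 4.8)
The plan is to argue by contradiction, using the Rigidity Theorem to reduce to a finite order symmetry and then extracting the contradiction from a local analysis at a vertex that $\sigma$ fixes. So suppose $\sigma=(123)$ is realizable in some orientable, closed, connected, irreducible $3$-manifold $M$, say by a homeomorphism $h$ of $(M,\Gamma)$ for an embedding $\Gamma$ of $K_7$. Since $K_7$ is $3$-connected, Remark~\ref{Sph3C} guarantees that $\Gamma$ has no sphere of type I, II, or III, so Theorem~\ref{RigidHS} applies; whichever of its three cases we are in, it provides an orientable, closed, connected $3$-manifold $M'$, an embedding $\Gamma'$ of $K_7$ in $M'$, and a \emph{finite order} homeomorphism $f$ of $(M',\Gamma')$ realizing $\sigma$. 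Hence it is enough to show that $(123)$ is not realizable by a finite order homeomorphism of any embedding of $K_7$ in any $3$-manifold.

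First I would reduce to the orientation preserving case: replacing $f$ by $f^2$ if necessary, we may assume $f$ preserves the orientation of $M'$, since $f^2$ still induces a $3$-cycle on $\Gamma'$, namely $(132)$. In particular $f$ fixes the vertices $4,5,6,7$, and at the vertex $4$ it fixes the three edges joining $4$ to $5$, $6$, $7$ while cyclically permuting the three edges joining $4$ to $1$, $2$, $3$. Exactly as in the proof of Lemma~\ref{123}, since $f$ has finite order and fixes vertex $4$, we may choose a ball neighborhood $N(4)$ with $f(N(4))=N(4)$ such that $\Gamma'\cap\partial N(4)$ consists of six points, one on each edge at vertex $4$. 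Then $f$ restricts to a finite order homeomorphism of the sphere $S=\partial N(4)$, and this restriction preserves the orientation of $S$ because $f$ preserves the orientation of the ball $N(4)$.

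The contradiction then comes from the structure of finite order homeomorphisms of $S^2$. Of the six points of $\Gamma'\cap S$, the three lying on the edges to $5$, $6$, $7$ are fixed by $f|_S$, while the three lying on the edges to $1$, $2$, $3$ are permuted in a $3$-cycle; so $f|_S$ is a nontrivial, orientation preserving, finite order self-homeomorphism of $S^2$ with at least three fixed points. But every finite order orientation preserving homeomorphism of $S^2$ is topologically conjugate to a rotation (a classical theorem of Ker\'ekj\'art\'o), and a nontrivial rotation of $S^2$ fixes exactly two points --- a contradiction. I expect the only genuinely delicate points to be the orientation bookkeeping needed to pass to an orientation preserving power of $f$ (so that $f|_S$ cannot be an orientation reversing involution or a fixed-point-free rotary reflection of $S^2$, for which ``at least three fixed points'' would not be immediately contradictory) and the invocation of the classical fact about $S^2$; once the embedding $\Gamma$ of the $3$-connected graph $K_7$ has been fed into Theorem~\ref{RigidHS}, the remainder is essentially the same mechanism as in Lemma~\ref{123}.
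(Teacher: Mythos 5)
Your proof is correct, but it derives the contradiction by a genuinely different mechanism than the paper does. The paper, after invoking Theorem~\ref{RigidHS} to obtain a finite order homeomorphism, replaces it by the power $g=h^{q}$ (where the order is $3^{r}q$ with $3\nmid q$), so that $g$ is orientation preserving, has $3$-power order, and still induces $(123)$; the contradiction is then \emph{global}: the fixed point set of $g$ contains the entire $K_4$ on vertices $4,5,6,7$, whereas Smith Theory forces the fixed set of an orientation preserving finite order homeomorphism of a closed $3$-manifold to be empty or a disjoint union of circles. Your argument is instead \emph{local}: you pass to $f^2$ to get orientation preservation, take an $f$-invariant ball neighborhood of the fixed vertex $4$, and observe that on the linking $2$-sphere the induced finite order, orientation preserving homeomorphism has at least three fixed points yet also cyclically permutes three other points, contradicting the Ker\'ekj\'art\'o classification (finite order orientation preserving homeomorphisms of $S^2$ are conjugate to rotations and so fix exactly two points when nontrivial). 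In effect you re-run the mechanism of Lemma~\ref{123} at a single fixed vertex of $K_7$, whereas the paper's proof for $K_7$ invokes Smith Theory for the ambient $3$-manifold. Both routes are sound; yours localizes the obstruction to a linking sphere and needs only the two-dimensional classification, while the paper's version exploits the large invariant subgraph $K_4$ and the three-dimensional Smith theory in a way that also lies closer to the machinery used later in Section~\ref{CompareHS}. Your attention to the orientation bookkeeping (passing to $f^2$ so that orientation reversing involutions or rotary reflections of $S^2$ cannot sneak in) is exactly the right point to be careful about, and is handled correctly.
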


\begin{proof}[Proof of Proposition \ref{K7}]
Suppose that $\sigma=(123)$ of the complete graph $K_7$ is realizable in some orientable, closed, connected, irreducible $3$-manifold $M$.  Since $K_7$ is $3$-connected, we can apply the Rigidity Theorem  (\ref{RigidHS}) to get an embedding  $\Gamma$ of $K_7$ in some $3$-manifold $M'$ such that $\sigma$ is induced on $\Gamma$ by a finite order homeomorphism $h$ of $(M',\Gamma)$.

We can express the order of $h$ as $3^rq$ where $q$ is not divisible by $3$.  Since $h$ induces $\sigma=(123)$ on $\Gamma$, so does $g=h^q$.  Furthermore, $g$ has order $3^r$ and hence is orientation preserving.  Let $F$ denote the fixed point set of $g$.  Then $F$ contains the $K_4$ subgraph with vertices $4$, $5$, $6$, $7$.  But, since $g$ is orientation preserving and has finite order, by Smith Theory \cite{S39}, either $F=\emptyset$ or $F$ is a collection of disjoint $S^1$s.  By this contradiction we conclude that $\sigma$ could not have been realizable in any orientable, closed, connected, irreducible $3$-manifold.  \end{proof}

\medskip

\section{Realizable automorphisms in homology spheres}\label{CompareHS}

Using Theorem~\ref{RigidHS} together with Smith Theory \cite{S39}, many results about the symmetries of spatial graphs in $S^3$ can be generalized to homology spheres. In particular, Corollary 1 of \cite{F95}, Theorem 1 of \cite{FL02}, Theorem 2.1 of \cite{JW00}, Theorem 1 of \cite{FW96}, and various results from \cite{F89} can now be proved for homology spheres.  For example, the following result is a consequence of Theorem~\ref{RigidHS}, Smith Theory \cite{S39}, and Lemmas 1-7 and Theorem 2 of \cite{F95}.

\begin{theorem} \label{ClassThmHS}
Let $M$ be a homology sphere which has an orientation reversing homeomorphism.  Then an automorphism $\sigma$ of the complete graph $K_n$, with $n>6$, is realizable in $M$ if and only if $\sigma$ is realizable in $S^3$.
\end{theorem}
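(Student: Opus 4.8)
The plan is to prove both directions of the equivalence, the forward direction being essentially formal and the reverse direction being the substantive content. For the reverse direction, suppose $\sigma$ is realizable in $S^3$. If $\sigma$ is realizable in $S^3$ by an orientation preserving homeomorphism, then Proposition~\ref{Aut3M}(1) immediately gives realizability of $\sigma$ in $M$ (since $M$ is a connected $3$-manifold). If $\sigma$ is realizable in $S^3$ only by an orientation reversing homeomorphism, then since by hypothesis $M$ possesses an orientation reversing homeomorphism, Proposition~\ref{Aut3M}(2) again gives realizability of $\sigma$ in $M$. Either way, $\sigma$ is realizable in $M$, completing this direction.

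For the forward direction, suppose $\sigma$ is an automorphism of $K_n$ with $n > 6$ that is realizable in the homology sphere $M$; we must show $\sigma$ is realizable in $S^3$. Since $n > 6$, the graph $K_n$ is $3$-connected, so by Remark~\ref{Sph3C} the Rigidity Theorem (\ref{RigidHS}) applies regardless of the embedding: because $M$ is a homology sphere, part~(2) of Theorem~\ref{RigidHS} yields an embedding $\Gamma$ of $K_n$ in some homology sphere $M''$ and a finite order homeomorphism $h$ of $(M'', \Gamma)$ realizing $\sigma$, with $h$ orientation reversing if and only if the original homeomorphism was. The strategy is now to invoke the classification machinery of \cite{F95}: Smith Theory constrains the fixed point sets of the prime-power-order subgroups of $\langle h \rangle$ acting on the homology sphere $M''$ exactly as it does on $S^3$ (finite order orientation preserving maps of a homology sphere have fixed point set empty or a union of circles; orientation reversing ones have fixed point set empty, $S^0$, or $S^2$ in the appropriate sense), so Lemmas 1--7 of \cite{F95}, whose proofs rely only on these Smith-theoretic facts together with the finite order of $h$ and the combinatorics of $K_n$, carry over verbatim with $M''$ in place of $S^3$. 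Feeding this into Theorem 2 of \cite{F95}, which characterizes precisely which automorphisms of $K_n$ for $n > 6$ can be realized by a finite order homeomorphism of an embedding in a homology sphere, we conclude that $\sigma$ belongs to this list; but Theorem 2 of \cite{F95} identifies this same list with the automorphisms of $K_n$ realizable in $S^3$. Hence $\sigma$ is realizable in $S^3$.

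The main obstacle is the careful bookkeeping in transferring Lemmas 1--7 and Theorem 2 of \cite{F95} from the $S^3$ setting to the homology sphere setting. One must verify that none of those arguments secretly uses a property of $S^3$ beyond what a homology sphere provides — specifically, that they use only: (i) the Smith Theory constraints on fixed point sets of finite order homeomorphisms, which hold over $\Z$ for homology spheres by the integral version already invoked in the proof of Proposition~\ref{K7}; (ii) the fact that spheres and tori in a homology sphere separate it into pieces with controlled homology (as exploited in Lemma~\ref{TorusHS} and Remark~\ref{SewTorus}); and (iii) the purely graph-theoretic combinatorics of automorphisms of $K_n$. The orientation-reversing hypothesis on $M$ is needed only so that the reverse direction can appeal to Proposition~\ref{Aut3M}(2), and so that the list of realizable automorphisms — which for $K_n$ includes automorphisms realizable only by orientation reversing homeomorphisms — is not artificially truncated. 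I would state explicitly which results of \cite{F95} are being reused and note that their proofs are Smith-theoretic and combinatorial, so that the generalization is immediate.
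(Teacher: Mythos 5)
Your proposal is correct and takes essentially the same approach as the paper, which states only that the theorem ``is a consequence of Theorem~\ref{RigidHS}, Smith Theory \cite{S39}, and Lemmas 1-7 and Theorem 2 of \cite{F95}'' and leaves the details to the reader. You have correctly identified the two halves: the reverse direction via Proposition~\ref{Aut3M}, and the forward direction via Theorem~\ref{RigidHS}(2) together with the transfer of the Smith-theoretic and combinatorial arguments of \cite{F95} to homology spheres; the only minor imprecision is your description of the fixed point set of a finite order orientation preserving homeomorphism of a homology $3$-sphere as ``a union of circles,'' which should be a single circle (or empty), but this does not affect the argument.
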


By contrast with Theorem \ref{ClassThmHS}, we prove the following.

\begin{theorem}\label{poincare} There exists an automorphism of a graph which is realizable in the Poincar\'e homology sphere but is not realizable in $S^3$.
\end{theorem}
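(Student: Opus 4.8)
**The plan is to exhibit a concrete graph $G$ together with an automorphism $\sigma$ which realizes in the Poincaré homology sphere $\Sigma$ but provably cannot realize in $S^3$.**

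The natural approach is to use the fact that $\Sigma$ admits a fixed-point-free finite order orientation-preserving homeomorphism that $S^3$ does not. Concretely, $\Sigma = S^3/\mathrm{Bin}(2,3,5)$ is a quotient of $S^3$ by the binary icosahedral group acting freely, so $\Sigma$ carries free actions of cyclic groups of orders $2$, $3$, and $5$ (among others). By contrast, Smith Theory tells us that any finite order orientation-preserving homeomorphism of $S^3$ has fixed-point set either empty or an $S^1$ — but more to the point, a free $\Z/p$ action on $S^3$ is impossible unless $p$... no, $S^3$ does have free $\Z/p$ actions (lens spaces), so the distinction must be more subtle. The real leverage is different: one should pick $\sigma$ and $G$ so that realizing $\sigma$ in $S^3$ would, via the Rigidity Theorem~\ref{RigidHS}(1), force a finite order homeomorphism $f$ of $S^3$ whose fixed-point data is incompatible with Smith Theory, while the corresponding finite order homeomorphism of $\Sigma$ is perfectly consistent because $\Sigma$'s finite group actions are more flexible (e.g.\ the standard $SO(3)$-type actions on $\Sigma$ viewed as $SU(2)/\mathrm{Bin}(2,3,5)$, or rotational symmetries of the Poincaré dodecahedral space). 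So first I would fix $G$ to be a $3$-connected graph — most cleanly a complete graph $K_n$ or a suitable vertex-transitive graph — and an automorphism $\sigma$ of small order, say order $5$, chosen so that its fixed-graph forces a specific fixed-point set dimension for any realizing finite-order homeomorphism.

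The key steps, in order, would be: (i) construct an explicit embedding $\Gamma$ of $G$ in $\Sigma$ together with a finite order homeomorphism $h$ of $(\Sigma,\Gamma)$ inducing $\sigma$ — here I would use an equivariant spine construction, putting $\Gamma$ inside $\Sigma$ as (a neighborhood of) the $1$-skeleton of the dodecahedral cell structure or as an orbit of an arc under a cyclic rotation group of $\Sigma$, so that $\sigma$ is visibly induced by a rotation; (ii) compute the relevant fixed-point/orbit structure of this rotation on $\Sigma$; (iii) assume for contradiction that $\sigma$ realizes in $S^3$, apply the Rigidity Theorem~\ref{RigidHS}(1) to get a finite order homeomorphism $f$ of $S^3$ inducing $\sigma$ on some embedding $\Gamma'$, and then (iv) derive a contradiction from Smith Theory~\cite{S39} by analyzing the fixed-point set of an appropriate power $g=f^q$ — as in the proof of Proposition~\ref{K7}, $g$ must fix a large subgraph of $\Gamma'$, forcing $\fix(g)$ to contain something that is not a disjoint union of circles (or not a sphere, depending on whether $g$ is orientation preserving). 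The contradiction in $S^3$ comes precisely from the constraint that $\fix(g)\subseteq S^3$ for orientation-preserving finite $g$ is $\emptyset$ or $S^1$, whereas the ``same'' combinatorial constraint is satisfiable in $\Sigma$ because the fixed-point set there can be, e.g.\ empty while the action is still free and realizes the needed permutation.

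**The main obstacle** will be step (iii)–(iv): engineering $G$ and $\sigma$ so that realizability in $S^3$ genuinely forces a Smith-Theory violation, rather than merely failing to be obviously realizable. The subtlety is that the Rigidity Theorem only gives a finite order homeomorphism of $S^3$ after possibly changing the embedding, so the fixed subgraph argument must be robust to re-embedding — which is why $3$-connectedness is essential (Remark~\ref{Sph3C}) and why $\sigma$'s fixed subgraph must be ``large'' in an embedding-independent sense, e.g.\ contain a subgraph that cannot embed in any $S^1$ or in any $2$-sphere, such as a $\theta$-graph or a $K_4$. Simultaneously I must verify that in $\Sigma$ the analogous obstruction is absent: this requires a genuine construction of the realizing finite-order homeomorphism of $\Sigma$, most likely by exploiting that $\Sigma$ has finite order homeomorphisms whose fixed-point sets can be empty (free actions) or can be more complicated $1$-complexes than in $S^3$. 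I would aim to find the smallest such example by searching among automorphisms of order $2$, $3$, or $5$ of the graphs $K_n$ for $n \le 7$, using the classification results of~\cite{F95} cited in Theorem~\ref{ClassThmHS} (which forces $n \le 6$, since for $n>6$ the two notions of realizability coincide), so the example will live among automorphisms of $K_4$, $K_5$, or $K_6$ — with $\sigma = (1234)$ on $K_6$ or a carefully chosen order-$5$ automorphism being the prime candidates, the verification in $\Sigma$ being carried out via an explicit rotational action on the dodecahedral model.
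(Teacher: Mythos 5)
Your high-level philosophy is sound --- use the Rigidity Theorem to reduce to finite order homeomorphisms and then find a fixed-point obstruction present in $S^3$ but absent in the Poincar\'e sphere --- but the proposal never actually produces an example, and the search strategy you outline would not find one. There are two genuine gaps.

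First, the only Smith-theoretic constraint you invoke is the standard one (the fixed set of an orientation-preserving finite order homeomorphism of $S^3$ is empty or a circle), but that obstruction is exactly what powers the $K_7$ argument of Proposition~\ref{K7}, and it kills realizability in \emph{every} irreducible $3$-manifold, including the Poincar\'e sphere. To separate $P$ from $S^3$ the paper needs a sharper and less well-known fact, Lemma~\ref{2Circle} from \cite{FLMPV14}: a fixed-point-free finite order homeomorphism of $S^3$ has at most \emph{two} circles arising as fixed point sets of its proper powers. This is where $P$ genuinely differs, because $P$ carries a Seifert fibration with \emph{three} exceptional fibers $\beta_2,\beta_3,\beta_5$ of orders $2,3,5$, and the standard order-$30$ fiber-rotating homeomorphism of $P$ is fixed-point free while its powers $f^{15},f^{10},f^6$ each fix a different circle. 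Without isolating this lemma, your step (iv) cannot close, since you only rule out homeomorphisms with nonempty fixed set and have no leverage against a free one.

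Second, your candidate graphs are wrong. You restrict attention to $K_n$ with $n\le 6$ on the grounds that Theorem~\ref{ClassThmHS} makes the notions coincide for $n>6$; that observation is correct as far as it goes but says nothing about whether any $K_n$ ($n\le 6$) works, and in fact the paper's example is not a complete graph at all. It is a $61$-vertex graph $G_{61}$ built from four disjoint cycles $\gamma_0,\gamma_2,\gamma_3,\gamma_5$ of lengths $30,15,10,6$, with spokes connecting $\gamma_0$ to the others, precisely so that the order-$30$ automorphism $\sigma$ cycling all four can be realized by the fiber rotation of $P$, sending $\gamma_0$ to a regular fiber and $\gamma_k$ to the exceptional fiber $\beta_k$. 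Nothing about $(1234)$ on $K_6$, or an order-$5$ automorphism of $K_5$, has a structure matched to the three exceptional fibers of $P$; you give no argument that those automorphisms are realizable in $P$, and the known non-realizability in $S^3$ of $(1234)$ on $K_6$ (from \cite{F89}) rests on embedding-theoretic arguments that are not of Smith-theoretic type and would not obviously fail in $P$. In short, the realization in $P$ must be \emph{constructed}, and the construction drives the choice of graph; your outline treats the choice of graph as a search problem and the realization in $P$ as a verification step, which inverts the actual logic of the proof.
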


We will use the construction of the Poincar\'e homology sphere $P$ by surgery on the link in $S^3$ in Figure \ref{PSurgery}, where each component $\alpha_{k}$ has surgery coefficient $k-1$.  In particular, let $N_2$, $N_3$, and $N_5$ be disjoint tubular neighborhoods of $\alpha_2, \alpha_3,$ and $\alpha_5$, respectively.  We then remove the interiors of the $N_k$ and sew in solid tori $N_2', N_3'$, and $N_5'$ along $\partial N_2, \partial N_3$, and $\partial N_5$ by gluing a meridian of $\partial N_k'$ to a $(k-1, 1)$ curve on $\partial N_k$.

\begin{figure}[h]
	\begin{center}
		\includegraphics[scale=.29]{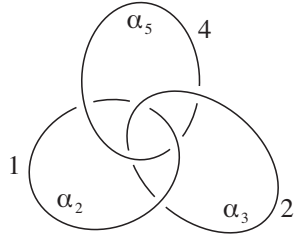}
	\end{center}
	\caption{A surgery that produces the Poincar\'e homology sphere $P$ from $S^3$.}
	\label{PSurgery}
\end{figure}

Because of the linking of $\alpha_2, \alpha_3$ and $\alpha_5$, we can choose a Hopf fibration of $S^3$ such that each $\alpha_k$ is a fiber and each $\partial N_k = \partial N_k'$ is a union of fibers. For each $k$, let $\beta_k$ be the core of $N_k'$; then, viewing $N_k' - \beta_k$ as a product $\partial N_k' \times [0,1)$, we can extend the fibration of $\partial N_k'$ to $N_k' - \beta_k$. Finally, adding in $\beta_2, \beta_3$, and $\beta_5$ as exceptional fibers, we get a Seifert fibration of $P$.  For each $k$, a Hopf fiber on $\partial N_k$ is a $(-1,1)$-curve, and its image in $\partial N_k'$ under the gluing map  is a $(1, k)$-curve. Thus, in our Seifert fibration of $N_k' \subseteq P$, each ordinary fiber goes around the exceptional fiber $\beta_k$ once and along it $k$ times, and hence $\beta_k$ has Seifert invariant $(k,1)$.

\begin{figure}[h!]
	\begin{center}
		\includegraphics[scale= .45]{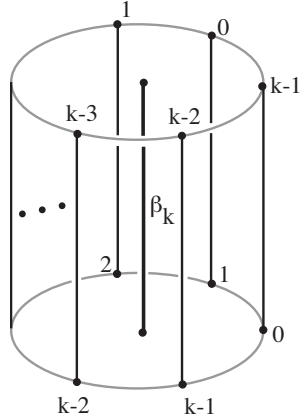}
	\end{center}
	\caption{The Seifert fibration of the solid tori $N_k'$.}
	\label{ExFiber}
\end{figure}

Note that the solid tori $N_k'$ can also be obtained by starting with a solid tube $D \times I$, where $D$ is the unit disk in $\C$ and $I$ is the unit interval, and identifying the disk $D \times \{0\}$ to $D \times \{1\}$ after rotating $D \times \{1\}$ counterclockwise by $\tfrac{2\pi}{k}$ (see Figure \ref{ExFiber}). With this identification, the image of $\{0\} \times I$ is the exceptional fiber $\beta_k$, and for each $z \in D - \{0\}$, the image of the union of $k$ vertical segments $\{z, e^{\tfrac{2\pi i}{k}} z, \cdots, e^{\tfrac{2(k-1)\pi i}{k}} z \} \times I$ is an ordinary fiber.

We now define a graph $G_{61}$ which consists of 61 vertices in cycles $\gamma_0 = \overline{a_1a_2 \cdots a_{30}}, \gamma_2 = \overline{b_1b_2 \cdots b_{15}}$, $\gamma_3 = \overline{c_1c_2 \cdots c_{10}},$ and $\gamma_5 = \overline{d_1d_2\cdots d_6}$, together with edges $\overline{a_ib_j}$ whenever $i \equiv j \pmod{15}$, $\overline{a_ic_j}$ whenever $i \equiv j \pmod {10}$, and $\overline{a_id_j}$ whenever $i \equiv j \pmod 6$. For $k = 2,3,5$, each vertex on the cycle $\gamma_k$ is connected to $k$ ``evenly-spaced" vertices on $\gamma_0$. See Figure \ref{G61} for an illustration of part of $G_{61}$, which shows the four $\gamma_k$ and the edges adjacent to vertices $a_1, b_1, c_1,$ and $d_1$.

\begin{figure}[h]
	\begin{center}
		\includegraphics[scale=.35]{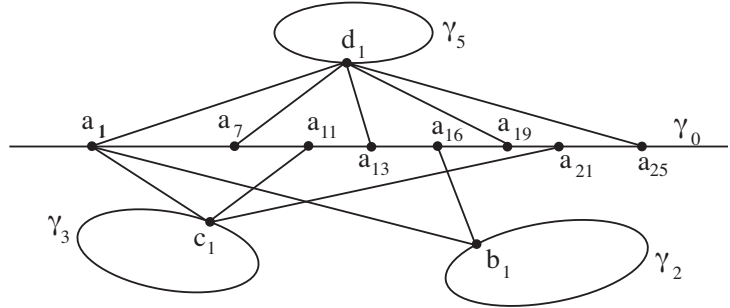}
	\end{center}
	\caption{An illustration of part of the graph $G_{61}$}
	\label{G61}
\end{figure}

Note that $G_{61}$ is non-planar and 3-connected. Now consider the automorphism
\[	\sigma = (a_1a_2 \cdots a_{30})(b_1b_2 \cdots b_{15})(c_1c_2 \cdots c_{10})(d_1d_2\cdots d_6)	\]
of $G_{61}$. Then $\sigma$ has order 30, and for $k = 2,3,5$, $\sigma$ rotates the cycle $\gamma_k$ with order $\frac{30}{k}$.

\begin{lemma} \label{SigmaP}
The automorphism $\sigma=(a_1a_2 \cdots a_{30})(b_1b_2 \cdots b_{15})(c_1c_2 \cdots c_{10})(d_1d_2\cdots d_6)$ of $G_{61}$ is realizable in the Poincar\'e homology sphere.
\end{lemma}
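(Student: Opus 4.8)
The plan is to realize $\sigma$ as a finite-order rotation of $P$ coming from its Seifert fibration. By the construction above, $P$ carries an $S^1$-action $\phi$ inducing that fibration, with exceptional orbits $\beta_2,\beta_3,\beta_5$ of multiplicities $2,3,5$ and all other orbits free. Put $h=\phi(e^{2\pi i/30},\cdot)$. First I would record the elementary facts: $h^{30}=\mathrm{id}$; on every regular orbit $h$ acts as a free rotation by $\tfrac1{30}$; for $k\in\{2,3,5\}$ the power $h^{30/k}=\phi(e^{2\pi i/k},\cdot)$ is an isotropy element along $\beta_k$, so it fixes $\beta_k$ pointwise and (being orientation preserving) acts on a normal disk of $\beta_k$ by a rotation of order $k$; hence $h$ restricts to $\beta_k$ as a fixed-point-free rotation of order exactly $30/k$, and $h$ itself has order exactly $30$. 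Note also that $h$ is isotopic to the identity, hence orientation preserving.

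Next I would build an $h$-invariant embedding $\Gamma$ of $G_{61}$ on which $h$ induces $\sigma$. Place $\gamma_0$ on a regular orbit $O_0$ with vertices $a_i:=h^{i-1}(a_1)$ for a chosen $a_1\in O_0$; these are $30$ distinct points cyclically permuted by $h$. Place each $\gamma_k$ on $\beta_k$ with its $30/k$ vertices the orbit of a chosen point of $\beta_k$ under $h$, which is consistent since $h|_{\beta_k}$ has order $30/k$. Realize the cycle edges of $\gamma_0$ and of each $\gamma_k$ as the subarcs of $O_0$, resp.\ $\beta_k$, between consecutive vertices. For the connecting edges, choose for each $k$ one arc $e_k$ from $a_1$ to the first vertex of $\gamma_k$, leaving $a_1$ transversally to $O_0$ in three distinct directions as $k$ runs over $\{2,3,5\}$, and take the family $\{h^i(e_k):i\in\Z/30\}$. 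A direct index computation — using that $h^i$ fixes a vertex of $\gamma_k$ exactly when $(30/k)\mid i$ — shows this family realizes precisely the edges $\overline{a_ib_j}$, $\overline{a_ic_j}$, $\overline{a_id_j}$ of $G_{61}$: the $k$ arcs of the family through a given vertex $v$ of $\gamma_k$ are $e_k,h^{30/k}(e_k),\dots,h^{(k-1)30/k}(e_k)$, i.e.\ the images of $e_k$ under the $\Z/k$ normal rotation of $h^{30/k}$, so they emerge at $v$ in $k$ evenly-spaced normal directions — matching the $k$ connecting edges incident to $v$ in $G_{61}$.

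The step I expect to require the most care is verifying that $\Gamma$, so assembled, is actually embedded. The local models are forced and compatible — transverse emanation at the $a_i$, and evenly-spaced emanation at the vertices of each $\gamma_k$ (via, e.g., the explicit $D\times I$ model of $N_k'$ above) — so the only real issue is routing $e_2,e_3,e_5$ through disjoint thin tubes into the solid tori $N_2',N_3',N_5'$ in such a way that the full $\langle h\rangle$-orbit of each stays embedded and the three orbits remain disjoint away from $O_0$. This is cleanest to see downstairs: $\langle h\rangle\cong\Z/30$ acts freely off $\beta_2\cup\beta_3\cup\beta_5$ and by the local isotropy rotations near them, so $P/\langle h\rangle$ is a closed $3$-orbifold with three circular singular loci of orders $2,3,5$, into which $\Gamma$ descends to the small graph consisting of four loops joined by three edges; this evidently embeds with the loops on the appropriate loci, and pulling back along the orbifold covering reproduces $\Gamma$ with the wrapping and branching described above. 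Once $\Gamma$ is in place we have $h(\Gamma)=\Gamma$, and by construction $h|_\Gamma$ induces the vertex permutation $\sigma$; since $h$ is a homeomorphism it follows that $h|_\Gamma=\sigma$, so $\sigma$ is realizable in $P$ — in fact by an orientation-preserving homeomorphism of finite order, which is what makes the contrast with $S^3$ in the following result sharp.
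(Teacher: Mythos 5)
Your proof is correct and follows essentially the same route as the paper's: realize $\sigma$ by the order-$30$ rotation of the Poincar\'e homology sphere's Seifert fibration, placing $\gamma_0$ on a regular fiber and each $\gamma_k$ on the exceptional fiber $\beta_k$, and obtaining the connecting edges by lifting disjoint arcs from the quotient. Your treatment is somewhat more explicit about the index bookkeeping, the local normal rotations at the exceptional fibers, and the orbifold structure of $P/\langle h\rangle$, but the underlying construction is the same.
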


\begin{proof}  Let $P$ denote the Poincar\'e homology sphere with exceptional fibers $\beta_2, \beta_3$, and $\beta_5$ as described above.  Let $f$ be an order 30 homeomorphism of $P$ that rotates each ordinary fiber with order $30$ and rotates each exceptional fiber $\beta_k$ with order $\frac{30}{k}$.

Now, we use $f$ to construct an embedding of $G_{61}$ in $P$. In particular, let $\beta_0$ denote an ordinary fiber in $P$ which is disjoint from the neighborhoods $N_k'$ for $k=2,3,5$.  We embed the cycle $\gamma_0$ as $\beta_0$ such that $f$ realizes the automorphism $(a_1a_2 \cdots a_{30})$ of $\gamma_0$. Then, for each $k = 2,3,5$, we embed the cycle $\gamma_k$ as $\beta_k$, such that $f$ induces the automorphisms $(b_1b_2 \cdots b_{15})$ of $\gamma_2$, $(c_1c_2 \cdots c_{10})$ of $\gamma_3$, and $(d_1d_2\cdots d_6)$ of $\gamma_5$.

In order to embed the remaining edges, we consider the quotient space $\overline{P}$ of $P$ under $f$, with quotient map $q:P \to \overline{P}$. Let $a = q(a_1), b = q(b_1), c = q(c_1)$, and $d = q(d_1)$. We take paths $\overline{ab}, \overline{ac}$, and $\overline{ad}$ in $\overline{P}$ whose interiors are pairwise disjoint and disjoint from the loops $q(\alpha_k)$, for $k = 0,2,3,5$. Then, we embed the edges with one vertex on $\alpha_2$, $\alpha_3$, or $\alpha_5$ and the other vertex on $\alpha_0$ as the lifts of $\overline{ab}, \overline{ac}$, and $\overline{ad}$. Figure \ref{G61Embed} illustrates part of the embedding of $G_{60}$ in a neighborhood of the regular fiber $\alpha_0$, and in the neighborhood $N_5'$ of the exceptional fiber $\beta_5$ (with the same gluing of the top and bottom disk as in Figure \ref{ExFiber}).  It follows from our construction that $f$ realizes the automorphism $\sigma$ of $G_{61}$ in $P$.
\end{proof}

\begin{figure}[h]
	\begin{center}
		\includegraphics[scale=.3]{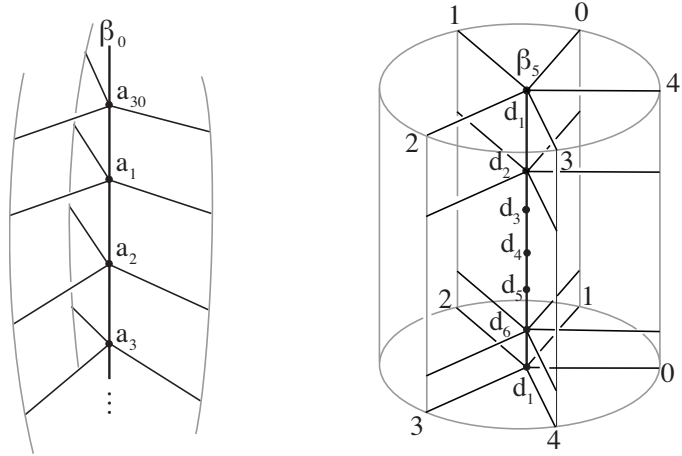}
	\end{center}
	\caption{The embedding of $G_{61}$ in neighborhoods of the fibers $\beta_0$ and $\beta_5$}
	\label{G61Embed}
\end{figure}

\medskip

To show that the automorphism $\sigma$ of $G_{61}$ is not realizable in $S^3$, we use the following Lemma.

\begin{lemma}[\cite{FLMPV14}] \label{2Circle}
Let $h$ be a finite order homeomorphism of $S^3$ which is fixed point free. Then there are at most two circles which are the fixed point set of some power of $h$ less than its order.
\end{lemma}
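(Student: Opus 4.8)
The plan is to reduce the lemma to a statement about an orthogonal action on $S^3$ and then finish it with elementary linear algebra. First I would linearize the action: it is known --- by the resolution of the Smith Conjecture together with Perelman's Geometrization Theorem --- that the finite cyclic group $\langle h\rangle$ is conjugate, by a homeomorphism of $S^3$, to a subgroup of $O(4)$ acting orthogonally on $S^3\subseteq\R^4$. Since a conjugating homeomorphism carries the fixed point set of each power of $h$ homeomorphically onto the fixed point set of the corresponding power of its image, we may assume without loss of generality that $h\in O(4)$. Write $n$ for the order of $h$.

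Next I would pin down the normal form of $h$. Since $h$ has finite order it is diagonalizable over $\C$ with eigenvalues roots of unity; since $h$ is orthogonal, its non-real eigenvalues occur in complex-conjugate pairs and its real eigenvalues are $\pm 1$; and since $h$ is fixed point free, $+1$ is not an eigenvalue. As there are four eigenvalues and the non-real ones pair off, the multiplicity of the eigenvalue $-1$ is even, so $\det h=1$; hence $h\in SO(4)$ and $h$ is conjugate in $O(4)$ to a block-diagonal rotation matrix $R(\theta_1)\oplus R(\theta_2)$ with $\theta_1,\theta_2\notin 2\pi\Z$ (where $R(\pi)=-I_2$ is permitted while $R(0)=I_2$ is not). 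If the unordered eigenvalue pairs $\{e^{\pm i\theta_1}\}$ and $\{e^{\pm i\theta_2}\}$ coincide, then $h$ is conjugate to scalar multiplication by a root of unity on $\C^2\cong\R^4$, so $\ker(h^j-I)$ is $\{0\}$ or $\R^4$ for every $j$, hence $\fix(h^j)$ is empty or all of $S^3$, and no power of $h$ has a circle for its fixed point set; thus the lemma holds in this case. Otherwise those two eigenvalue pairs are distinct, and the eigenspace decomposition of $\C^4$ descends to a splitting $\R^4=P_1\oplus P_2$ into two $h$-invariant $2$-planes on which $h$ restricts (in suitable coordinates) to $R(\theta_1)$ and $R(\theta_2)$ respectively.

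Then I would count the circles directly. Since $P_1$ and $P_2$ are $h$-invariant, for every $j$ we have $\ker(h^j-I)=(\ker(h^j-I)\cap P_1)\oplus(\ker(h^j-I)\cap P_2)$; and $h^j$ restricts on $P_k$ to the rotation $R(j\theta_k)$, which is either the identity or fixes only $0$, so $\ker(h^j-I)\cap P_k$ is $P_k$ or $\{0\}$. Hence $\ker(h^j-I)$ is one of $\{0\}$, $P_1$, $P_2$, or $\R^4$, and accordingly $\fix(h^j)=S^3\cap\ker(h^j-I)$ is one of $\emptyset$, the great circle $C_1:=S^3\cap P_1$, the great circle $C_2:=S^3\cap P_2$, or $S^3$. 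Since $P_1\cap P_2=\{0\}$, the circles $C_1$ and $C_2$ are disjoint, and in particular distinct. Finally, if $1\le j\le n-1$ then $h^j\neq\mathrm{id}$, so $\ker(h^j-I)\neq\R^4$ and hence $\fix(h^j)\neq S^3$; therefore every circle that is the fixed point set of some power of $h$ less than its order is either $C_1$ or $C_2$, which is precisely the assertion of the lemma.

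The only substantial ingredient is the linearization in the first step. It relies on the resolution of the Smith Conjecture and on Perelman's Geometrization Theorem --- the latter precisely to handle the case in which $h$, or a power of $h$, acts freely, so that the relevant quotient is a spherical space form. Once the action is linear, everything reduces to a short computation with rotations of $\R^4$.
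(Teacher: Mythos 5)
The paper does not prove this lemma itself; it imports it verbatim from \cite{FLMPV14}, so there is no in-paper proof to compare against. Judged on its own merits, your argument is correct, and the reduction to orthogonal normal form plus the $\R^4=P_1\oplus P_2$ decomposition is clean and complete: once $h$ is a block rotation $R(\theta_1)\oplus R(\theta_2)$ with $\theta_1,\theta_2\notin 2\pi\Z$ on two complementary invariant $2$-planes, you correctly observe that $\ker(h^j-I)$ splits as the direct sum of its intersections with $P_1$ and $P_2$, each of which is $\{0\}$ or all of $P_k$, so the only possible circle fixed sets for proper powers are the two great circles $C_k=S^3\cap P_k$. The degenerate case $\{e^{\pm i\theta_1}\}=\{e^{\pm i\theta_2}\}$ is handled correctly as scalar multiplication on $\C^2$, and you correctly verify $\det h=1$ from the parity of the $-1$ eigenvalue.

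One clarification on the attribution in your first step. Linearizing a smooth finite \emph{cyclic} action $\langle h\rangle$ on $S^3$ when $h$ itself is fixed-point free but some power of $h$ may have fixed points is not literally a consequence of the Smith Conjecture (which concerns the fixed-point set of a single orientation-preserving involution-like action) together with Perelman's manifold geometrization. What you actually need is the statement that every smooth finite group action on $S^3$ is conjugate to an orthogonal one, which in full generality is the geometrization of $3$-orbifolds (Thurston; proved in detail by Boileau--Leeb--Porti and Cooper--Hodgson--Kerckhoff), combined with elliptization (Perelman) to cover the case where the action is free. The paper works in the smooth category, so this applies. With that reference corrected, the proof stands: it trades elementary Smith-theoretic bookkeeping, which \cite{FLMPV14} may well use, for one heavy input (geometrization) followed by linear algebra, which makes the count of at most two circles completely transparent.
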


\begin{lemma} \label{G61NS3}
The automorphism $\sigma=(a_1a_2 \cdots a_{30})(b_1b_2 \cdots b_{15})(c_1c_2 \cdots c_{10})(d_1d_2\cdots d_6)$  of $G_{61}$ is not realizable in $S^3$.
\end{lemma}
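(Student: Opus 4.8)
The plan is to argue by contradiction and reduce, via the Rigidity Theorem, to the study of a finite-order homeomorphism of $S^3$, exploiting the fact that in the Poincar\'e realization the cycles $\gamma_2,\gamma_3,\gamma_5$ play the role of three exceptional fibers. So suppose $\sigma$ is realizable in $S^3$. Since $G_{61}$ is $3$-connected, Theorem~\ref{RigidS3} gives an embedding $\Gamma$ of $G_{61}$ in $S^3$ and a finite-order homeomorphism $f$ of $(S^3,\Gamma)$ realizing $\sigma$; let $n=|f|$, so $30\mid n$, say $n=30m$. The cycles $\gamma_0,\gamma_2,\gamma_3,\gamma_5$ are embedded as pairwise disjoint circles that $f$ leaves setwise invariant, and since $f$ permutes the vertices lying on each of them as a single cycle of length $30,15,10,6$ respectively, the restriction of $f$ to each of these circles has order exactly $30,15,10,6$ respectively (a finite-order homeomorphism of a circle realizing a single $k$-cycle on an orbit has order exactly $k$).

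Next I would focus on the prime-order elements $g_2=f^{n/2}=f^{15m}$, $g_3=f^{n/3}=f^{10m}$, $g_5=f^{n/5}=f^{6m}$, of orders $2,3,5$. Since $15m,10m,6m$ are multiples of $15,10,6$ respectively, $g_2$ fixes $\gamma_2$ pointwise, $g_3$ fixes $\gamma_3$ pointwise, and $g_5$ fixes $\gamma_5$ pointwise. As $10m$ and $6m$ are even, $g_3$ and $g_5$ are orientation preserving whatever $f$ is, so by Smith theory \cite{S39} each of $\mathrm{Fix}(g_3)$ and $\mathrm{Fix}(g_5)$ is a single circle; since they contain $\gamma_3$ and $\gamma_5$ we conclude $\mathrm{Fix}(g_3)=\gamma_3$ and $\mathrm{Fix}(g_5)=\gamma_5$. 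Consequently $\mathrm{Fix}(f)\subseteq\mathrm{Fix}(g_3)\cap\mathrm{Fix}(g_5)=\gamma_3\cap\gamma_5=\emptyset$, so $f$ is fixed-point free.

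If $g_2$ is orientation preserving, the same Smith-theory argument gives $\mathrm{Fix}(g_2)=\gamma_2$, and then $\gamma_2,\gamma_3,\gamma_5$ are three pairwise disjoint circles, each the fixed-point set of a proper power of the fixed-point-free finite-order homeomorphism $f$ --- contradicting Lemma~\ref{2Circle}. The hard part is the remaining possibility, that $g_2$ is an orientation-reversing involution. Then $\mathrm{Fix}(g_2)$ must be a $2$-sphere $S$ (an orientation-reversing involution of $S^3$ has fixed set of dimension $0$ or $2$, and here it contains the circle $\gamma_2$); moreover $g_2$ interchanges the two $3$-balls $B,B'$ bounded by $S$, since a self-homeomorphism of a $3$-ball fixing its boundary pointwise is orientation preserving. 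Because $f$ commutes with $g_2$ and $g_2$ is an odd power of $f$, $f$ itself interchanges $B$ and $B'$, hence $f^2(B)=B$, and Brouwer's theorem produces a fixed point of $f^2$ in $B$. But $g_3=(f^2)^{5m}$ and $g_5=(f^2)^{3m}$ are powers of $f^2$, so $\mathrm{Fix}(f^2)\subseteq\mathrm{Fix}(g_3)\cap\mathrm{Fix}(g_5)=\emptyset$ --- a contradiction. Thus no such $f$ exists, and $\sigma$ is not realizable in $S^3$.
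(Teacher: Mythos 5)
Your proof is correct and follows the same overall strategy as the paper's: reduce to a finite-order realization $f$ via the Rigidity Theorem, show $f$ is fixed-point free, and invoke Lemma~\ref{2Circle}. Two points of comparison are worth noting.

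First, your route to fixed-point freeness is cleaner than the paper's. You observe that $g_3$ and $g_5$ have odd order, hence are orientation preserving, so Smith theory gives $\mathrm{Fix}(g_3)=\gamma_3$ and $\mathrm{Fix}(g_5)=\gamma_5$, and then $\mathrm{Fix}(f)\subseteq\gamma_3\cap\gamma_5=\emptyset$. The paper instead supposes a fixed point $p$, considers $h^{10}$ fixing both $p$ and $\beta_3$, and derives a contradiction from Smith theory; your argument makes the same point with less machinery and without needing to first pin down $|f|=30$.

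Second, and more substantively, you explicitly treat the case in which $g_2$ is orientation reversing, whereas the paper's written proof does not. After establishing that $f$ is fixed-point free, the paper immediately invokes Lemma~\ref{2Circle} as if all three of $\gamma_2,\gamma_3,\gamma_5$ are fixed circles of powers of $f$; but if $f$ (hence $g_2=f^{15m}$, an odd power) is orientation reversing, then $\mathrm{Fix}(g_2)$ is a $2$-sphere rather than a circle, and Lemma~\ref{2Circle} no longer yields a contradiction. Your Brouwer argument --- $g_2$ swaps the two complementary balls of its fixed sphere, forcing $f$ to swap them, so $f^2$ preserves a ball and has a fixed point, which must lie in $\mathrm{Fix}(g_3)\cap\mathrm{Fix}(g_5)=\emptyset$ --- closes this case correctly. (The phrase ``$g_2$ is an odd power of $f$'' isn't what's needed there; the relevant observation is simply that if $f$ preserved each ball then so would every power, including $g_2$.) Overall your write-up is a slight strengthening of the paper's proof: same skeleton, but with the orientation-reversing possibility for $g_2$ handled rather than elided.
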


\begin{proof}  Suppose that for some embedding $\Gamma$ of $G_{61}$ in $S^3$, there is a homeomorphism $h$ of $(S^3, \Gamma)$ that realizes $\sigma$. Since $G_{61}$ is 3-connected, by Theorem \ref{RigidS3}, we can assume that $h$ has finite order. Then $h^{30}$ pointwise fixes $\Gamma$. Since $G_{61}$ is non-planar, by Smith Theory~\cite{S39}, $G_{61}$ cannot be contained in the fixed point set of any non-trivial finite order homeomorphism of $S^3$.  It follows that $h$ must have order $30$.

For each $k$, let $\beta_k$ be the embedding of $\gamma_k$ in $\Gamma$.  Since $h$ induces $\sigma$ on $\Gamma$, $h$ rotates $\beta_k$ with order $\frac{30}{k}$ for $k=2, 3, 5$ and $h$ rotates $\beta_0$ with order $30$.   Suppose for the sake of contradiction that $h$ fixes some point $p$ in $S^3$.  Since $h$ non-trivially rotates each $\beta_k$, the fixed point $p$ cannot be on any $\beta_k$.  Now $h^{10}$ fixes $p$ and pointwise fixes $\beta_3$.   By Smith Theory this means $h^{10}$ must pointwise fix a sphere, and hence $h^{10}$ must be a reflection.  But this is impossible since $h^{10}$ has order $3$.  Thus $h$ is fixed point free.

 But by Lemma \ref{2Circle}, there are at most two circles which are the fixed point set of some power of $h$ less than $30$. Hence no such $h$ exists, and thus $\sigma$ cannot be realizable in $S^3$.
\end{proof}\medskip

Theorem \ref{poincare} now follows from Lemmas~\ref{SigmaP} and \ref{G61NS3}.
\medskip

\section{Acknowledgments} The first author thanks Daryl Cooper for a helpful conversation which led to Theorem~\ref{K7}.  The authors also thank the anonymous referee for identifying parts of the arguments that should be clarified.

\bibliographystyle{amsplain}
\bibliography{HSPaperRef.bib}

\end{document}